\newcommand{\ba}{\begin{array} }
\newcommand{\ea}{\end{array} }
\newcommand{\bae}{\begin{eqnarray}}
\newcommand{\eae}{\end{eqnarray}}
\newcommand{\bea}{\begin{eqnarray*}}
\newcommand{\eea}{\end{eqnarray*}}
\newcommand{\be}{\begin{equation}}
\newcommand{\ee}{\end{equation}}
\newcommand{\pr}{{\bf Proof}~~}
\newtheorem{theorem}{\hskip\parindent\bf Theorem}[section]
\newtheorem{lemma}{\hskip\parindent\bf Lemma}[section]
\newtheorem{corollary}{\hskip\parindent\bf Corollary}[section]
\begin{document}
 \markboth{Permanence of a general discrete-time two-species-interaction model with non-monotonic per capita growth rate}{Yun Kang}
\title{Permanence of a general discrete-time two-species-interaction model with non-monotonic per capita growth rates}
\author{Yun Kang\footnote{Applied Sciences and Mathematics, Arizona State University, Mesa,
AZ 85212, USA. E-mail: yun.kang@asu.edu}}

\begin{abstract}
Combined with all density-dependent factors, the per capita growth rate of a species may be non-monotonic. One important consequence is that species may suffer from weak Allee effects or strong Allee effects. In this paper, we study the permanence of a discrete-time two-species-interaction model with non-monotonic per capita growth rates for the first time. By using the average Lyapunov functions and extending the ecological concept of the relative nonlinearity, we find a simple sufficient condition for guaranteeing the permanence of systems that can model complicated two-species interactions. The extended relative nonlinearity allows us to fully characterize the effects of nonlinearities in the per capita growth functions with non-monotonicity. These results are illustrated with specific two species competition and predator-prey models of generic forms with non-monotone per capita growth rates.
\end{abstract}

\bigskip
\begin{keyword}
Permanence,\, Non-monotonic Per Capita Growth Rates, \,Allee Effects,\, Relative Nonlinearity, \, Two-Species Interaction Population Models\end{keyword}
\maketitle

\section{Introduction}
This paper is concerned with the permanence of a discrete-time two-species interaction model with non-monotonic per capita growth rates that can be described by the following equations:
\bae\label{gx}
x_{t+1}\,&=&\,x_t f(x_t,y_t)\\
\label{gy}
y_{t+1}\,&=&\,y_t g(x_t,y_t)\eae
where $x_t$ and $y_t$ denote population densities of species $x$ and $y$ in season $t$ respectively; $f(x,y)$ and $g(x,y)$ are per capita growth rates of these two species, which are nonnegative and twice differentiable in $\mathbb R^2_+$. In addition, at least one of $\frac{\partial f}{\partial x}, \frac{\partial f}{\partial y},\frac{\partial g}{\partial x},\frac{\partial g}{\partial y}$ change signs in $\mathbb R^2_+$, i.e., the per capita growth rates are non-monotonic.

The coexistence of different species in ecological communities has been a main research theme in ecology. For deterministic models, the idea of permanence, which guarantees convergence on an interior attractor from any strictly positive initial conditions, is regarded as a strong form of coexistence. Permanence of dynamical systems has been studied by many researchers using Lyapunov exponents (Shreiber 2000; Garay and Hofbauer 2003; Salceanu and Smith 2009) and average Lyapunov functions (Garay and Hofbauer 2003; Kon 2004; Kang and Chesson 2010) have used Lyapunov exponents and the notions of unsaturated invariant sets (i.e., a compact invariant set that is repelling respect to the system) and measures for Kolmogorov-type systems. Recent study by Kon (2004) uses an average Lyapunov function to show that nonexistence of saturated boundary fixed points is sufficient for permanence under certain convexity and concavity conditions on the per capita growth rates of the species. Kang and Chesson (2010) make use of the concept of the relative nonlinearity (Chesson 1994) to extend Kon's (2004) results beyond convexity and concavity conditions to arbitrary nonlinearities for two dimensional discrete-time competition and prey-predator models. However, Kang and Chesson (2010) as well as Kon (2004) make assumptions on the monotonicity of $f(x,y)$ and $g(x,y)$, e.g., $ \frac{\partial f}{\partial y}$ and $\frac{\partial g}{\partial x}$ does not change the sign. In this article, we will drop this assumption and derive a easy-to-check permanence criterion for a general two-species interaction model with the per capita growth rates being non-monotone by using the theory of average Lyapunov functions and extending the ecological concept of the relative nonlinearity. The original concept of relative nonlinearity is a species-coexistence mechanism that results from different species having different nonlinear responses to competition together with fluctuations in time or space in the intensity of competition (Chesson 1994\& 2000; Kang and Chesson 2010). This extended concept allows us to fully characterize the effects of nonlinearities in the per capita growth functions, which are of major significance in the presence of fluctuated populations and non-monotonic per capita growth rates.

The structure of the rest paper is organized as follows: In section 2, we introduce some basic terminology of semi-dynamical systems and prove the important lemmas that are critical to derive sufficient criterion for the permanence of two-species interaction models with non-monotonic per capita growth rates; In section 3, we give a simple sufficient condition for the permanence of prey-predation models with non-monotonic per capita growth rates; In section 5, we apply our results to specific competition models and prey-predator models with generic with non-monotone per capita growth rates. We conclude with a discussion of the broader implications and prospectus for future work in this area.

\section{Preliminary results}
Let $$X=\{(x, y): x \geq 0, y\geq 0\},\,\,S_x=\{(x,0): x\geq 0\}, \,\,S_y=\{(0, y): y\geq 0\}$$ and $$S=S_x\bigcup S_y,\,\, M=X\setminus
S=\{(x,y): x>0,y>0\}.$$Define a discrete two dimensional dynamical system $H: X \rightarrow X$ where $X$ is a metric space, and denote $H^0(x,y)=(x,y)$ and $H^n(x,y)=(x_n,y_n)$. We call that $H$ is \emph{positively invariant} in $X$, if $H^n(X)\subseteq X$ for any $n\in\mathbb Z_+$.
We call that $H$ is \emph{dissipative}, if there exists $B>0$, for all initial condition $(x,y)\in X$ such that
\[\limsup \max\{x_n,y_n\}\leq B.\]This implies that $H$ has a compact absorbing set in $X$. For convenience, when a system $H$ is dissipative, we will consider $X$ is a compact metric space.

\noindent We call that $H$ is \emph{permanent} in $M$, if there exists two positive numbers $0<b<B$, for all initial condition $(x,y)\in M$ such that
\[b\leq\liminf \min\{x_n,y_n\}\leq\limsup \max\{x_n,y_n\}\leq B.\]This implies that $H$ has a compact interior attractor $B\subset M$ that attracts all the points in $M$. \\

\noindent The boundary equilibrium $(x^*,y^*)\in S$ is \emph{unsaturated} if 
$$ f(x^*,y^*)\geq 1 \mbox{ and } g(x^*,y^*)\geq 1.$$
\noindent The \emph{positive orbit} of $H$ with initial condition $(x_0,y_0)\in X$ is defined as
\[\gamma^+(x_0,y_0)=\{(x_i,y_i): (x_i,y_i)=H^i(x_0,y_0), \mbox{ for all } i\in Z^+\}.\]
The \emph{omega limit set} of $(x,y)\in R^2_+$ is defined as
\[\omega(x,y)=\{(\xi,\eta): H^{t_j}(x,y)\rightarrow (\xi,\eta) \mbox{ for some sequences } t_j \rightarrow \infty\}.\]
The \emph{omega limit set} of a subset $S\subset X$ of $X$ is defined as
\[\omega(S)=\bigcup_{(x,y)\in S} \omega(x,y).\]

The main goal of this paper is to find a sufficient conditions on $f$ and $g$ such that the system \eqref{gx}-\eqref{gy} is permanent in $M$ when $f(x,0), f(0,y), g(x,0)$ and $g(0,y)$ are non-monotonic in $S$, i.e., at least one of the follows change sign in $S$,
$$\frac{\partial f(x,0)}{\partial x},\,\, \frac{\partial f(0,y)}{\partial y},\,\, \frac{\partial g(0,y)}{\partial y},\,\, \frac{\partial g(x,0)}{\partial x}.$$

\subsection{External Lyapunov exponents}
Let $\{(x_i,y_i)\}_{i=0}^{\infty}$ to be the positive orbit with initial conditions $(x_0,y_0)\in S$. Then the average per capita growth rates of species $x,y$ with initial conditions $(x_0,0)\in S_x$ (or $(0,y_0)\in S_y$) after $n-1$  generations can be represented as
\bae\label{ngrowthxx-yx}
r_n^{xx}(x_0,0)\,=\,\frac{\sum_{i=0}^{n-1}\ln f(x_i,0)}{n}\,\,&,&\,\,
r_n^{yx}(x_0,0)\,=\,\frac{\sum_{i=0}^{n-1}\ln g(x_i,0)}{n}\\
\label{ngrowthyy-xy}
r_n^{yy}(0,y_0)\,=\,\frac{\sum_{i=0}^{n-1}\ln g(0,y_i)}{n}\,\,&,&\,\,r_n^{xy}(0,y_0)\,=\,\frac{\sum_{i=0}^{n-1}\ln f(0,y_i)}{n}
\eae
Define $\check{r}^{xx}(x_0,0),\,\,\check{r}^{yx}(x_0,0),\,\,\check{r}^{yy}(0,y_0),\,\,\check{r}^{xy}(0,y_0)$ as the $\limsup$ of the sequences $$\{r_n^{xx}(x_0,0)\}_{n=1}^{\infty},\{r_n^{yx}(x_0,0)\}_{n=1}^{\infty},\{r_n^{yy}(0,y_0)\}_{n=1}^{\infty},\{r_n^{xy}(0,y_0)\}_{n=1}^{\infty}$$  respectively. Moreover, we use $$\bar{r}^{xx}(x_0,y_0),\,\,\bar{r}^{yx}(x_0,0),\,\,\bar{r}^{yy}(x_0,y_0),\,\,\bar{r}^{xy}(0,y_0)$$ instead if their limits actually exist.

Notice that the quantity $\check{r}^{yx}(x_0,0)$ (or $\check{r}_{xy}(0,y_0)$) is the \emph{external Lyapunov exponent} of $S_x$ (or $S_y$), which gives the average invasion speed of the invader $y$ (or $x$) (Rand, Wilson \& McGlade 1994). If both $\check{r}^{yx}(x_0,0)$ and $\check{r}_{xy}(0,y_0)$ are positive for all $x_0\geq 0, y_0\geq 0$, then species $y$ and $x$ are able to coexist. The interesting question is that what kind of conditions on $f(x,y), g(x,y)$ can guarantee this, therefore guarantee the system \eqref{gx}-\eqref{gy} is permanent. In order to answer this question, we assume that $f(x,y)$ and $g(x,y)$ of \eqref{gx}-\eqref{gy} satisfy Condition \textbf{H} in the rest of the article, where \textbf{H} is stated as follows:
\begin{description}
\item \textbf{H}: Both $f(x,y)$ and $g(x,y)$ are strictly positive and twice differentiable in $X\setminus\{(0,0)\}$ with $f(0,0)>1$ and $g(0,0)\geq 0$.  In addition, the following two limits exist and are continuous:
$$\lim_{x\rightarrow 0}f(x,y)=f(0,y)\,\,\mbox{ and } \,\, \lim_{y\rightarrow 0}g(x,y)=g(x,0).$$
\end{description}
\noindent\textbf{Remark:} Condition \textbf{H} implies that the system \eqref{gx}-\eqref{gy} is positively invariant in $X$. Now we need to show the following two lemmas first.
\begin{lemma}\label{singleper}[Bounded population density] Assume that $f(x,y)$ in the system \eqref{gx}-\eqref{gy} satisfies Condition \textbf{H}. If there exists $0< a_\infty<1$ such that 
 $$\lim_{x\rightarrow\infty}\sup_{y\in\mathbb R_+}f(x,y)=a_\infty,$$ then the population density of species $x$ is bounded by some positive constant. Moreover, if $(x_0,y_0)\in S_x$ with $x_0>0$, then $\bar{r}^{xx}(x_0,0)=0.$ \end{lemma}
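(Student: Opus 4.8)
The plan is to establish the two assertions separately. For the uniform bound on the $x$-coordinate I would use a contraction-above-a-threshold estimate; for the identity $\bar r^{xx}(x_0,0)=0$ on the edge $S_x$ I would first telescope the defining sum and then show the edge orbit eventually stays in a fixed compact subinterval of $(0,\infty)$. Concretely, for the bound: fix $c$ with $a_\infty<c<1$; since $\lim_{x\to\infty}\sup_{y\ge 0}f(x,y)=a_\infty$ there is $K>0$ with $f(x,y)<c$ for all $x>K$ and all $y\ge 0$. Then whenever $x_t>K$ we get $x_{t+1}=x_tf(x_t,y_t)<c\,x_t$, so the $x$-coordinate is strictly contracted at each step it spends above $K$ and must return to $[0,K]$ after finitely many steps; whenever $x_t\le K$ we get $x_{t+1}\le K\bar f$ with $\bar f:=\sup\{f(x,y):0\le x\le K,\ y\ge 0\}$. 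Hence $\limsup_t x_t\le K\bar f$ and every orbit is bounded (e.g. by $\max\{x_0,K\bar f\}$), which is the first claim.

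For the second claim, note that $S_x$ is invariant, so on it $y_t\equiv 0$ and the dynamics reduces to $x_{t+1}=x_tf(x_t,0)$. Taking logarithms and summing telescopes the quantity in \eqref{ngrowthxx-yx}: $\sum_{i=0}^{n-1}\ln f(x_i,0)=\ln x_n-\ln x_0$, so
\[
r_n^{xx}(x_0,0)=\frac{\ln x_n-\ln x_0}{n}.
\]
The bound $x_n\le B_x$ from the first part already gives $\limsup_n r_n^{xx}(x_0,0)\le 0$; it remains to show $\ln x_n/n\to 0$, i.e. that $\{x_n\}$ stays bounded away from $0$ on the edge. This is where $f(0,0)>1$ is used: by continuity of $f(\cdot,0)$ there is $\delta\in(0,K]$ with $f(x,0)>1$ on $[0,\delta]$, so $x_{n+1}>x_n$ whenever $x_n<\delta$; and $m:=\min_{[\delta,B_x]}f(x,0)>0$, so $x_{n+1}\ge m\,x_n$ whenever $x_n\in[\delta,B_x]$. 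Put $\eta:=\delta\min\{1,m\}>0$. A short case check shows $[\eta,B_x]$ is forward invariant under the edge map; and either $x_n<\delta$ for all $n$ (then $\{x_n\}$ is increasing, so $x_n\in[x_0,\delta]$) or some $x_N\ge\delta$, after which $x_n\in[\eta,B_x]$ for all $n\ge N$. In both cases $\ln x_n$ is bounded, so $r_n^{xx}(x_0,0)\to 0$ and hence $\bar r^{xx}(x_0,0)=0$.

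The routine parts are the geometric-contraction estimate and the telescoping identity. The step needing real care is the lower bound on the edge orbit — i.e. ruling out (sub)exponential decay of $x_n$ to $0$ — which is exactly where the non-degeneracy $f(0,0)>1$ enters and where the case analysis must be handled properly, since the one-dimensional edge map can overshoot the threshold $\delta$ from either side. A minor technical point is the finiteness of $\bar f=\sup\{f(x,y):0\le x\le K,\ y\ge 0\}$; if Condition \textbf{H} is not intended to give local boundedness in $x$ uniformly in $y$, one can apply the hypothesis only along the edge, where $f(\cdot,0)$ is continuous on the compact $[0,K]$ and hence bounded — which is all that the main (second) conclusion actually requires.
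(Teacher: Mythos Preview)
Your proposal is correct and follows essentially the same route as the paper: a contraction-above-a-threshold argument for the upper bound, then the telescoping identity $r_n^{xx}(x_0,0)=(\ln x_n-\ln x_0)/n$ together with a two-sided bound on the edge orbit for $\bar r^{xx}=0$. The only substantive difference is that the paper does not prove the lower bound on the edge orbit directly but cites an external result (Lemma~B.1 of Kang and Chesson, 2010) to obtain $0<b\le\liminf x_n\le\limsup x_n\le L_m$; your self-contained argument via $f(0,0)>1$, the threshold $\delta$, and the forward-invariant interval $[\eta,B_x]$ accomplishes the same thing and makes explicit exactly where $f(0,0)>1$ is used. Your flag about the finiteness of $\bar f=\sup_{0\le x\le K,\ y\ge 0}f(x,y)$ is well taken: the paper writes $L_m=L_\epsilon\max_{(x,y)\in[0,L_\epsilon]^2}f(x,y)$, tacitly restricting $y$ to $[0,L_\epsilon]$ before any bound on $y$ has been established, so your caveat is at least as careful as the original; and, as you note, on $S_x$ one only needs $\sup_{[0,K]}f(\cdot,0)<\infty$, which follows from continuity.
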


\begin{proof} Define $a_1(x)=\sup_{y\in\mathbb R_+}f(x,y)$, then the condition $\lim_{x\rightarrow\infty}\sup_{y\in\mathbb R_+}f(x,y)=a_\infty<1$ indicates that for any $\epsilon +a_\infty<1$, there exists a number $L_\epsilon$ large enough such that 
$$a_1(x)<a_\infty+\epsilon<1 \mbox{ for all } x>L_\epsilon.$$
Since  $a_1(x)=\sup_{y\in\mathbb R_+}f(x,y)$, therefore,
$$f(x,y)\leq a_1(x)<a_\infty+\epsilon<1 \mbox{ for all } x>L_\epsilon, y\geq 0.$$
Now if an initial condition of species $x$ is greater than $L_\epsilon$, i.e., $x_0\geq L_\epsilon$, then there exists a positive integer $N$ such that $x_N<L_\epsilon.$ Assume that this is not true, then for any positive integer $n$, we have $x_n \geq L_\epsilon.$ In particular, we have
$$x_{n}=x_{n-1} f(x_{n-1}, y_{n-1})=x_0\prod_{i=0}^{n-1} f(x_i,y_i)<x_0 (a_\infty+\epsilon)^{n}\rightarrow 0\mbox{ as } n\rightarrow \infty.$$ This is a contradiction to the fact that $x_{n}\geq L_\epsilon$ for all $n\in Z_+$. Therefore, there exists a positive integer $N$, such that $x_N<L_\epsilon$.

\indent Define $L_m=L_\epsilon\max_{(x,y)\in [0,L_\epsilon]^2}\{f(x,y)\}.$ 
We claim that if $x_N<L_\epsilon$, then $x_n\leq L_m \mbox{ for all } n>N.$
Suppose that this is not true, then there exists some positive integer $P$ such that $x_{N+p}>L_m.$ Let $ p_m=\min\{p+N: x_{N+p}>L_m\}$, then we have
$$  x_{p_m}>L_m \mbox{ and } x_{p_m-1}\leq L_m.$$
This implies that either $$ x_{p_m-1}\leq L_\epsilon \mbox{ or } L_\epsilon < x_{p_m-1}\leq L_m.$$
If $x_{p_m-1}\leq L_\epsilon$, then $$x_{p_m}=x_{p_m-1}f(x_{p_m-1},y)\leq L_\epsilon\max_{(x,y)\in [0,L_\epsilon]^2}\{f(x,y)\}=L_m$$ which is a contradiction to $x_{p_m}>L_m.$

\noindent If $L_\epsilon<x_{p_m-1}\leq L_m$, then due to the fact that 
$$f(x,y)<a_\infty+\epsilon<1\mbox{ for all } (x,y)\in [L_\epsilon,\infty)\times [0,\infty),$$  we have
$$x_{p_m}=x_{p_m-1}f(x_{p_m-1},y)\leq x_{p_m-1}(a_\infty+\epsilon) <L_m$$ which is also a contradiction to $x_{p_m}>L_m.$ Therefore, we have $$x_n\leq L_m \mbox{ for all } n>N.$$
This implies that for any initial condition $(x_0,y_0)\in X$ with $x_0>0$, there exists a positive integer $p_m$, such that
 $$x_n\leq L_m \mbox{ for all } n>p_m.$$
Therefore, the population density of species $x$ is bounded  in the system \eqref{gx}-\eqref{gy}. 
 
Next, notice that $S_x$ is positively invariant, then for any initial condition in $S_x$, we have $y_n=0$ for all future $n>0$, i.e., we have the following boundary dynamics,
$$x_{n+1}=x_n f(x_n,0) \mbox{ for all } n\geq 0.$$
Then by applying Lemma B.1 (Kang and Chesson 2010), we can conclude that for any initial condition $(x_0,0)$ with $x_0>0$, the following inequalities hold
$$0<b<\liminf_{n\rightarrow\infty}x_n\leq\limsup_{n\rightarrow\infty}x_n\leq L_m.$$
Let $\{(x_i,0)\}_{i=0}^\infty$ to be the positive orbit $\gamma^+(x_0,0)$ starting at $x_0>0$, then we have
 \[0=\lim_{n\rightarrow\infty}\frac{\ln{\frac{b}{x_0}} }{n} \leq \liminf_{n\rightarrow\infty}\frac{\ln{\frac{x_{n-1}}{x_0}}}{n}\leq\limsup_{n\rightarrow\infty}\frac{\ln{\frac{x_{n-1}}{x_0}} }{n}\leq \limsup_{n\rightarrow\infty}\frac{\ln{\frac{B}{x_0}}}{n}=0\]
 This implies that for all $x_0> 0$, we have $$\bar{r}^{xx}(x_0,0)=\lim_{n\rightarrow\infty} \frac{\sum_{i=0}^{n-1}\ln f(x_i,0)}{n}=\lim_{n\rightarrow\infty} \frac{\sum_{i=0}^{n-1}\ln \frac{x_{n-1}}{x_0}}{n}=0.$$Therefore, we have proved the statement. 
\end{proof}
\noindent\textbf{Remark:} Lemma \ref{singleper} gives an easy-to-check sufficient criterion for species $x$ being bounded in a joint system \eqref{gx}-\eqref{gy}, which can not only can apply to competition and prey-predator models but also can apply to the mutualism system. For example, we can apply Lemma \ref{singleper} to show that the following mutualism model \eqref{m1}-\eqref{m2} (Kon 2004) is dissipative:
\bae\label{m1}
x_{t+1}&=&x_t e^{r_1-a_{11}x_t+\frac{a_{12}y_t^{v_{11}}}{1+y_t^{v_{12}}}}\\
\label{m2}
y_{t+1}&=&y_te^{r_2-a_{22}y_t+\frac{a_{21}x_t^{v_{22}}}{1+x_t^{v_{21}}}}
\eae where $0\leq v_{11}\leq v_{12}$ and $0\leq v_{22}\leq v_{21}$.

For convenience, define $F(x,y)=\ln f(x,y), G(x,y)=\ln g(x,y)$ and $F_i, G_i, i=x,y$ as the first partial derivative respect to $i$; $F_{ii}, G_{ii}, i=x,y$ as the second partial derivative respect to $i$.  Let $\{(x_i,0)\}_{i=0}^\infty$ to be a positive orbit $\gamma^+(x_0,0)$ with $x_0\geq 0$. Then we have the following lemma:

\begin{lemma}\label{relative}[The external Lyapunov exponent]
Let $(x^*,0)$ be a point in $S_x$ such that  $f(x^*,0)>0$ and $F_x(x^*,0)\neq 0$. Then the following two cases hold if Condition \textbf{H} holds as  well as $$\lim_{n\rightarrow\infty} f(x,0)=a_1<1.$$ \begin{description}
\item \textbf{Case one:} If $f(0,0)>1$, then $\bar{r}^{xx}(x_0,0)=0$ where $x_0>0$. In addition, the external Lyapunov exponent of $S_x$ is
$$\check{r}^{yx}(x_0,0)=G(x^*,0)-\frac{G_x(x^*,0)}{F_x(x^*,0)}F(x^*,0)+\Delta_y(x_0).$$
\item \textbf{Case two:} If $f(0,0)\leq1$, then $\check{r}^{xx}(x_0,0)\leq 0$ where $x_0>0$. In addition, if $\frac{G_x(x^*,0)}{F_x(x^*,0)}\leq 0$, then the \emph{external Lyapunov exponent} of $S_x$ satisfies follows
 $$\check{r}^{yx}(x_0,0)\geq G(x^*,0)-\frac{G_x(x^*,0)}{F_x(x^*,0)}F(x^*,0)+\Delta_y(x_0)$$   
\end{description}  where $$\Delta_y=\limsup \frac{\sum_{i=0}^{n-1}(x_i-x^*)^2\int^1_0{(1-t)\left[G_{xx}(x_{it},0)-\frac{G_x(x^*,0)F_{xx}(x_{it},0)}{F_{x}(x^*,0)}\right]dt}}{n}$$ and
$x_{it}=x^*+(x_i-x^*)t$.
\end{lemma}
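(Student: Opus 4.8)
The statement is a second-order Taylor expansion performed along the boundary orbit $\{(x_i,0)\}_{i\ge0}$, combined with two elementary facts about the scalar map $x\mapsto xf(x,0)$. First I would set up the reduction: since $S_x$ is positively invariant, an orbit starting at $(x_0,0)$ with $x_0>0$ obeys $x_n=x_0\prod_{i=0}^{n-1}f(x_i,0)$, so
\[
r_n^{xx}(x_0,0)=\frac1n\ln\frac{x_n}{x_0},\qquad r_n^{yx}(x_0,0)=\frac1n\sum_{i=0}^{n-1}G(x_i,0).
\]
From $\lim_{x\to\infty}f(x,0)=a_1<1$ the same bootstrap argument used in the proof of Lemma~\ref{singleper}, now applied to the scalar map $x\mapsto xf(x,0)$, produces a constant $B>0$ with $\limsup_n x_n\le B$; hence $\check{r}^{xx}(x_0,0)=\limsup_n\frac1n\ln\frac{x_n}{x_0}\le0$, which is the first assertion of Case two. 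If in addition $f(0,0)>1$, then $f(\cdot,0)>1$ on a neighbourhood of $0$, so the scalar persistence statement (Lemma B.1 of Kang and Chesson 2010, invoked exactly as inside the proof of Lemma~\ref{singleper}) also gives $\liminf_n x_n\ge b>0$; squeezing $0<b\le x_n\le B$ then forces $\bar{r}^{xx}(x_0,0)=\lim_n\frac1n\ln\frac{x_n}{x_0}=0$, the first assertion of Case one.

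With $\rho:=G_x(x^*,0)/F_x(x^*,0)$, which is well defined because $F_x(x^*,0)\ne0$, I would Taylor-expand $F=\ln f$ and $G=\ln g$ to second order in the first variable about $x^*$ along $S_x$, using the integral form of the remainder,
\[
F(x_i,0)=F(x^*,0)+F_x(x^*,0)(x_i-x^*)+(x_i-x^*)^2\int_0^1(1-t)F_{xx}(x_{it},0)\,dt,
\]
and similarly for $G$; these are legitimate since $f,g$ are positive and twice differentiable on each segment joining $x^*$ to $x_i$ (in Case one this segment lies in the compact interval $[b,B]$ inside the open positive axis). Solving the $F$-identity for $(x_i-x^*)$ and substituting into the $G$-identity gives, for each $i$, an expansion $G(x_i,0)=\big(G(x^*,0)-\rho F(x^*,0)\big)+\rho F(x_i,0)+\delta_i$ with $\delta_i=(x_i-x^*)^2\int_0^1(1-t)\big[G_{xx}(x_{it},0)-\rho F_{xx}(x_{it},0)\big]dt$. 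Averaging over $i=0,\dots,n-1$ and dividing by $n$ yields the exact identity
\[
r_n^{yx}(x_0,0)=\big(G(x^*,0)-\rho F(x^*,0)\big)+\rho\,r_n^{xx}(x_0,0)+R_n,
\]
where $R_n=\frac1n\sum_{i=0}^{n-1}\delta_i$, so that $\limsup_n R_n=\Delta_y(x_0)$ by the definition of $\Delta_y$.

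It remains to pass to $\limsup_n$ in this identity. In Case one, $r_n^{xx}(x_0,0)\to0$, so $\rho\,r_n^{xx}(x_0,0)+R_n$ has the same $\limsup$ as $R_n$, and the identity collapses to the claimed equality $\check{r}^{yx}(x_0,0)=G(x^*,0)-\rho F(x^*,0)+\Delta_y(x_0)$. In Case two only $\limsup_n r_n^{xx}(x_0,0)\le0$ is available, so I would argue one-sidedly: using $\rho\le0$, for every $\e>0$ one has $\rho\,r_n^{xx}(x_0,0)\ge\rho\e$ for all large $n$, hence $\limsup_n\big(\rho\,r_n^{xx}(x_0,0)+R_n\big)\ge\rho\e+\Delta_y(x_0)$, and letting $\e\downarrow0$ gives $\check{r}^{yx}(x_0,0)\ge G(x^*,0)-\rho F(x^*,0)+\Delta_y(x_0)$. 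I expect this last step to be the only genuinely delicate point: the identity for $r_n^{yx}$ is exact, but $\limsup$ is merely subadditive so it cannot be split term by term, and the sign hypothesis $\rho\le0$ in Case two is exactly what is needed to convert the one-sided bound $\check{r}^{xx}(x_0,0)\le0$ into a one-sided bound on the averaged remainder. A minor loose end is the finiteness of the integral remainders along the orbit --- automatic in Case one, where the orbit is trapped in $[b,B]$, and otherwise absorbed into the regularity postulated in Condition \textbf{H}.
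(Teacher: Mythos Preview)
Your proposal is correct and follows essentially the same route as the paper: the paper also uses the second-order Taylor expansion of $F$ and $G$ about $x^*$ with integral remainder, solves the $F$-expansion for the averaged first-order term, substitutes into the $G$-expansion to obtain the exact identity $r_n^{yx}=G(x^*,0)-\rho F(x^*,0)+\rho\,r_n^{xx}+R_n$, and then invokes Lemma~\ref{singleper} to handle $r_n^{xx}$ in the two cases. Your treatment of Case two via the $\e$-argument is in fact slightly more careful than the paper's, which simply applies $\limsup(a_n+b_n)\ge\liminf a_n+\limsup b_n$ together with $\liminf(\rho\,r_n^{xx})=\rho\,\check{r}^{xx}\ge0$; the two arguments are equivalent.
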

\begin{proof}
The condition that $\lim_{n\rightarrow\infty} f(x,0)=a_1<1$, indicates that the superior of the average growth rate of the species $x$ without species $y$ is less than or equal zero by applying Lemma \ref{singleper}, i.e., for all initial conditions $x_0>0$, we have $$\check{r}^{xx}(x_0,0)=\limsup_{n\rightarrow\infty}\frac{\sum_{i=0}^{n-1} F(x_i,0)}{n}=\limsup_{n\rightarrow\infty}\frac{\ln(x_n/x_0)}{n}\leq \limsup_{n\rightarrow\infty}\frac{\ln(B/x_0)}{n}=0.$$
Doing exact $2^{nd}$ order Taylor expansion on $F(x_i,0), G(x_i,0)$ around $x=x^*$ gives:
 $$\begin{array}{rcl}
F(x_i,0)&=&F(x^*,0)+ F_x(x^*,0)(x_i-x^*)+(x_i-x^*)^2\int^1_0{(1-t)F_{xx}(x_{it},0)dt}\\\
G(x_i,0)&=&G(x^*,0)+ G_x(x^*,0)(x_i-x^*)+(x_i-x^*)^2\int^1_0{(1-t)G_{xx}(x_{it},0)dt}\end{array} $$where $x_{it}=x^*+(x_i-x^*)t$.
Then we have $$\begin{array}{rcl}
r_n^{xx}(x_0,0)&=&\frac{\sum_{i=0}^{n-1}F(x_i\,,0)}{n}\\
&=&F(x^*,0)+ F_x(x^*,0)\frac{\sum_{i=0}^{n-1}(x_i-x^*)}{n}+\frac{\sum_{i=0}^{n-1}(x_i-x^*)^2\int^1_0{(1-t)F_{xx}(x_{it},0)dt}}{n}
\end{array} $$ 
This implies that we have
$$\begin{array}{rcl}
\frac{\sum_{i=0}^{n-1}(x_i-x^*)}{n}&=&\frac{r_n^{xx}(x_0)-F(x^*,0)}{F_x(x^*,0)}-\frac{\sum_{i=0}^{n-1}(x_i-x^*)^2\int^1_0{(1-t)F_{xx}(x_{it},0)dt}}{nF_x(x^*,0)}
\end{array} $$
This implies that we can rewrite $r_n^{yx}(x_0)$ as follows:
$$\begin{array}{rcl}
r_n^{yx}(x_0)&=&\frac{\sum_{i=0}^{n-1}G(x_i\,,0)}{n}\\
&=&G(x^*,0)+ G_x(x^*,0){\frac{\sum_{i=0}^{n-1}(x_i-x^*)}{n}}+\frac{\sum_{i=0}^{n-1}(x_i-x^*)^2\int^1_0{(1-t)G_{xx}(x_{it},0)dt}}{n}\\
&=&G(x^*,0)+ G_x(x^*,0){\left[\frac{r_n^{xx}(x_0,0)-F(x^*,0)}{F_x(x^*,0)}-\frac{\sum_{i=0}^{n-1}(x_i-x^*)^2\int^1_0{(1-t)F_{xx}(x_{it},0)dt}}{nF_x(x^*,0)}\right]}\\&&+\frac{\sum_{i=0}^{n-1}(x_i-x^*)^2\int^1_0{(1-t)G_{xx}(x_{it},0)dt}}{n}\\
&=&G(x^*,0) + \frac{r_n^{xx}(x_0,0)G_x(x^*,0)}{F_x(x^*,0)}-\frac{F(x^*,0)G_x(x^*,0)}{F_x(x^*,0)}\\&&+\frac{\sum_{i=0}^{n-1}(x_i-x^*)^2\int^1_0{(1-t)\left[G_{xx}(x_{it},0)-\frac{G_x(x^*,0)F_{xx}(x_{it},0)}{F_x(x^*,0)}\right]dt}}{n}
\end{array} $$
Define $\Delta_y(x_0)$ as
$$\Delta_y(x_0)=\int^1_0{(1-t)\left[G_{xx}(x_{it},0)-\frac{G_x(x^*,0)F_{xx}(x_{it},0)}{F_x(x^*,0)}\right]dt},$$
then we have \begin{description}
\item \textbf{Case one:} If $f(0,0)>1$, then by applying Lemma \ref{singleper}, we have 
$$\bar{r}^{xx}(x_0,0)=\lim_{n\rightarrow\infty}r_n^{xx}(x_0,0)=0.$$ This indicates that
$$\begin{array}{rcl}
\check{r}^{yx}(x_0,0)&=& G(x^*,0)-\frac{F(x^*,0)G_x(x^*,0)}{F_x(x^*,0)}+\Delta_y(x_0)\\
\end{array} $$
\item \textbf{Case two:} If $\frac{G_x(x^*,0)}{F_x(x^*,0)}\leq 0$, then due to the fact that $$\liminf_{n\rightarrow\infty}r_n^{xx}(x_0,0)\leq\check{r}^{xx}(x_0,0)\leq0,$$we have
$$\begin{array}{rrl}\check{r}^{yx}(x_0,0)&\geq& G(x^*,0)-\frac{F(x^*,0)G_x(x^*,0)}{F_x(x^*,0)}+\liminf_{n\rightarrow\infty}\frac{r_n^{xx}(x_0,0)G_x(x^*,0)}{F_x(x^*,0)}+\Delta_y(x_0)\\
&\geq&G(x^*,0)-\frac{F(x^*,0)G_x(x^*,0)}{F_x(x^*,0)}+\Delta_y(x_0)
\end{array} $$
\end{description}
\end{proof}
\noindent\textbf{Remark:} In the case that $(x^*,0)$ is a nontrivial boundary fixed point, then $F(x^*,0)=0$, thus we have the following corollary from Lemma \ref{relative}:
\begin{corollary}\label{c:relative}[The external Lyapunov exponent] Assume that all conditions in Lemma \ref{relative} hold as well as $F(x^*,0)=0$, then
\begin{description}
\item \textbf{Case one:} If $f(0,0)>1$, then the external Lyapunov exponent of $S_x$ is
$$\check{r}^{yx}(x_0,0)=G(x^*,0)+\Delta_y(x_0).$$
\item \textbf{Case two:} If $f(0,0)\leq1$ and $\frac{G_x(x^*,0)}{F_x(x^*,0)}\leq 0$, then the \emph{external Lyapunov exponent} of $S_x$ satisfies follows
 $$\check{r}^{yx}(x_0,0)\geq G(x^*,0)+\Delta_y(x_0)$$   
\end{description}  where $\Delta_y$ is defined as the same in Lemma \ref{relative}.\end{corollary}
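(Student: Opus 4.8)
The plan is to obtain the corollary as an immediate specialization of Lemma \ref{relative}: every hypothesis of that lemma is assumed here, and in addition we now suppose that $(x^*,0)$ is a nontrivial boundary fixed point, so that $F(x^*,0)=\ln f(x^*,0)=0$. Nothing genuinely new has to be proved; the only work is to track how the formulas of Lemma \ref{relative} collapse once this extra identity is inserted.

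First, for Case one, recall that Lemma \ref{relative} asserts, under $f(0,0)>1$, that
$$\check{r}^{yx}(x_0,0)=G(x^*,0)-\frac{F(x^*,0)G_x(x^*,0)}{F_x(x^*,0)}+\Delta_y(x_0),$$
where the quotient is well defined because $F_x(x^*,0)\neq 0$ is part of the standing hypothesis. Substituting $F(x^*,0)=0$ kills the middle term and leaves exactly $\check{r}^{yx}(x_0,0)=G(x^*,0)+\Delta_y(x_0)$. The quantity $\Delta_y(x_0)$ itself is untouched by the substitution: its defining integral involves only $G_{xx}$, $F_{xx}$ and the ratio $G_x(x^*,0)/F_x(x^*,0)$, none of which depends on the value $F(x^*,0)$.

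For Case two the argument is identical, except that Lemma \ref{relative} now delivers only the inequality
$$\check{r}^{yx}(x_0,0)\geq G(x^*,0)-\frac{F(x^*,0)G_x(x^*,0)}{F_x(x^*,0)}+\Delta_y(x_0)$$
under the extra sign condition $\frac{G_x(x^*,0)}{F_x(x^*,0)}\leq 0$; setting $F(x^*,0)=0$ then yields $\check{r}^{yx}(x_0,0)\geq G(x^*,0)+\Delta_y(x_0)$, as claimed.

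There is, frankly, no real obstacle here. The only two points worth even a sentence of care are that the division by $F_x(x^*,0)$ remains legitimate after the substitution (it does, since $F_x(x^*,0)\neq 0$ is part of the hypothesis) and that the $\Delta_y$ term is genuinely invariant under setting $F(x^*,0)=0$. Everything else is a one-line algebraic cancellation inside the conclusions already established in Lemma \ref{relative}, so the proof should consist essentially of quoting that lemma and simplifying.
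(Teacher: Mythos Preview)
Your proposal is correct and matches the paper's approach exactly: the paper treats this corollary as an immediate consequence of Lemma~\ref{relative}, obtained simply by substituting $F(x^*,0)=0$ into the two conclusions of that lemma, and does not supply any further argument. Your write-up is, if anything, more explicit than the paper about why the substitution is legitimate.
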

\noindent\textbf{Remark:} The expression $G_{xx}(x,0)-\frac{G_x(x^*,0)F_{xx}(x,0)}{F_{x}(x^*,0)}$ in Corollary \ref{c:relative} can be considered as the extended ecological concept of the relative nonlinearity from Kang and Chesson (2010). This allows us to give an easy-to-check sufficient criterion for permanence applicable to a broad range of situations and avoids checking the detailed information on $\omega(S_x)$ and calculating the external Lyapunov exponent $\check{r}^{yx}(x_0,0)$. Depending on the signs of  $G(x^*,0)$ and $\Delta_y(x_0)$, there are four situations:
\begin{description}
\item \textbf{Permanence}: If both $G(x^*,0)$ and $\Delta_y(x_0)$ are nonnegative and at least one of them is positive for all $x_0\in \mathbb R$, then the \emph{external Lyapunov exponent} $\check{r}^{yx}(x_0,0)$ of $S_x$ is positive. Thus we can apply Theorem 2.2 and its corollary 2.3 of Huston (1984) to show that species $y$ is permanent. We will focus on this case in this article.
\item \textbf{Relative Permanence}: Notice that it is possible that $G(x^*,0)<0$ but $\check{r}^{yx}(x_0,0)$ is still positive for almost every $x_0\in S_x$. This is the case when permanence fails due to the nontrivial boundary equilibrium point $(x^{*},0)$ being saturated, which give a proper setting for the \emph{relative permanence} (Kang 2011a; Kang and Smith 2011b).
\item \textbf{Boundary Attractor}: The case when $\check{r}^{yx}(x_0,0) <0 \mbox{ for all } x_0\in S_x$ represents the case when fluctuations associated with the non-point attractor undermine permanence because then the invasion rate is lower than that predicted by the point attractor $(x^{*},0)$. Thus, the system has no permanence due to existing attractors on the $S_x$. There are many models (Kon 2006; Kang \emph{et al} 2008) presenting this scenario under some proper parameter ranges.
\item\textbf{Multiple Attractors}: The case when $\check{r}^{yx}(x_0,0) >0\mbox{ for a dense set of } x_0\in S_x$ is a mixture of case 2 and 3, which can generate rich dynamics such as riddled basin of attractions (Ashwin \emph{et al} 1996; Ferriere and Gatto 1995; Kon 2006).
\end{description}


\section{Sufficient conditions for the permanence of a general two-species interaction models}
 Let species $x$ and $y$ interact with each other in an ecology community and their population density can be described by the system \eqref{gx}-\eqref{gy}. We are interested in a two-species system \eqref{gx}-\eqref{gy} that satisfies Condition \textbf{H} as well as the following conditions:
\begin{description}
\item\textbf{G1:} There exists $a_1,a_2$ such that 
 $$\lim_{x\rightarrow \infty}\sup_{y\geq0}f(x,y)=a_1<1\mbox{ and }\lim_{y\rightarrow\infty}\sup_{x\geq 0}g(x,y)=a_2<1.$$
\item \textbf{G2:} All the nontrivial boundary fixed points are unsaturated, i.e., if $(x^*,0)$ and $(0,y^*)$ are nontrivial boundary fixed points, then $f(0,y^*)>1\mbox{ and } g(x^*, 0)>1$.
\item \textbf{G3:} There exists some point $(x^*,0)\in S_x$ such that $F_x(x^*,0)\neq 0$ and $G(x^*,0)-\frac{G_x(x^*,0)F(x^*,0)}{F_x(x^*,0)}>0$ and the equality \eqref{G3} holds

 \bae\label{G3}
              r^y(x)&=& G_{xx}(x,0)-\frac{G_x(x^*,0)F_{xx}(x,0)}{F_x(x^*,0)}\geq 0, \mbox{ for all } x>0    
\eae
\item \textbf{G4:} If $g(0,0)>1$, then there exists point $(0,y^*)\in S_y$ such that $G_y(0, y^*)\neq 0$ and $F(0,y^*)-\frac{F_y(0,y^*)G(0,y^*)}{G_y(0,y^*)}>0$ and the inequality \eqref{G4} holds
\bae\label{G4}
                               r^x(y)&=& F_{yy}(0,y)-\frac{F_y(0,y^*)G_{yy}(0,y)}{G_y(0,y^*)}\geq 0,\mbox{ for all } y>0                       \eae
If $g(0,0)<1$, then $\frac{F_y(0,y^*)}{G_y(0,y^*)}<0$ holds in addition to \eqref{G4} holds.
  \end{description}
\noindent Condition \textbf{H} guarantees that the population of species $x$ and $y$ will not drop below 0 and species $x$ will not be too close to the origin $(0,0)$. Condition \textbf{G1} implies that both species $x$ and $y$ suffer from
intra-competition, which drops their per capita fecundity below 1 if their population density is too large regardless of other species' population. The per capita growth rates $f,g$ can be non-monotonic with respect to $x, y$.
Condition \textbf{G2} ensures that both species $x$ and $y$ have nonnegative growth rates at the nontrivial boundary equilibria. Condition \textbf{G3}-\textbf{G4}  guarantee that species $x$ and species $y$ have positive invading speed when their population density are rare. The last condition \textbf{G4} also implies that species $y$ is able to persist even if it is not persistent in its single state (i.e., $g(0,0)<1$). More importantly, \eqref{G3}-\eqref{G4} is an extended concept of the relative nonlinearity that introduced by Chesson (Chesson 2000). Relative nonlinearity is a species-coexistence mechanism that results from different species having different nonlinear responses to competition together with fluctuations in time or space in the intensity of competition (Chesson 1994). The expressions \eqref{G3}-\eqref{G4} can be treated as a general form of relative nonlinearity when $F_x(x,0),G_x(x,0), F_y(0,y),G_y(0,y)$ are non-invertible (Chesson 2000). More generally, we can consider $r^y(x)$ (or $r^x(y)$) as a contribution to the invading speed of species $y$ (or $x$) due to species $x$ (or $y$) has fluctuated population in its single state, e.g., if species $x$ (or $y$) has only point attractors $(x^*_i,0),i=1..,u$ (or $(0,y^*_j),j=1,..,v$), then $r^y(x)=0$(or $r^x(y)=0$). If the species $x$ has non-point attractors (i.e., fluctuated populations), then the contribution $r^y(x)$ can be positive or negative. In this paper, we focus on the case when both $r^y(x)$ and $r^x(y)$ are nonnegative and the main goal of this section is to prove the following theorem:
\begin{theorem}[Sufficient conditions on permanence of two-species models]\label{s_per_c} If the system \eqref{gx}-\eqref{gy} satisfies Condition \textbf{G1-G4} as well as Condition \textbf{H}, then it is permanent in $M$.
\end{theorem}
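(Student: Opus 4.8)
\section*{Proof proposal for Theorem \ref{s_per_c}}

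The plan is to apply the average Lyapunov function method of Hutson (1984) (cited as ``Huston 1984'' in the excerpt), exactly in the way outlined after Corollary \ref{c:relative}. The candidate average Lyapunov function is the standard one for two-species Kolmogorov systems, $P(x,y) = x^{\alpha} y^{\beta}$ with $\alpha, \beta > 0$ to be chosen, or equivalently one argues directly via the external Lyapunov exponents $\check r^{yx}$ and $\check r^{xy}$ being positive on the whole boundary. Concretely, the skeleton is: (i) establish dissipativity so that we may work on a compact absorbing set; (ii) classify $\omega(S)$, the omega limit set of the boundary, and show it is contained in a compact set bounded away from $(0,0)$; (iii) show that for every point of $\omega(S_x)$ the invasion exponent $\check r^{yx}(x_0,0) > 0$, and symmetrically $\check r^{xy}(0,y_0) > 0$ on $\omega(S_y)$; (iv) invoke Hutson's theorem (Theorem 2.2 and Corollary 2.3) to conclude permanence in $M$.

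Step (i) is immediate: Condition \textbf{G1} gives $\lim_{x\to\infty}\sup_y f(x,y) = a_1 < 1$ and $\lim_{y\to\infty}\sup_x g(x,y) = a_2 < 1$, so Lemma \ref{singleper} (applied to $f$, and to $g$ with the roles of $x,y$ swapped) shows each population is ultimately bounded by an explicit constant; hence $H$ is dissipative and we restrict to a compact $X$. For step (ii), the boundary $S = S_x \cup S_y$ is positively invariant by Condition \textbf{H}; on $S_x$ the dynamics reduce to $x_{n+1} = x_n f(x_n,0)$, and since $f(0,0) > 1$ (from \textbf{H}) the orbit cannot approach the origin — Lemma B.1 of Kang and Chesson (2010), as used inside the proof of Lemma \ref{singleper}, gives $0 < b \le \liminf x_n \le \limsup x_n \le L_m$, so $\omega(S_x)$ is a compact subset of the open half-line $\{(x,0) : x > 0\}$; likewise for $\omega(S_y)$ when $g(0,0)>1$ (and when $g(0,0)<1$, species $y$'s boundary behavior is handled through the one-sided estimates of Case two of Lemma \ref{relative} and Corollary \ref{c:relative}, which is why \textbf{G4} carries the extra sign hypothesis $F_y(0,y^*)/G_y(0,y^*) < 0$). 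The equilibria on $S$ are exactly $(0,0)$ together with the nontrivial boundary fixed points, and \textbf{G2} makes all the nontrivial ones unsaturated; the origin is handled separately using $f(0,0)>1$, which makes $x$ grow near $(0,0)$.

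Step (iii) is where the real content lies, and it is precisely what Lemma \ref{relative}, Corollary \ref{c:relative}, and Conditions \textbf{G3}--\textbf{G4} are designed for. Pick the point $(x^*,0)$ furnished by \textbf{G3} with $F_x(x^*,0)\neq 0$ and $G(x^*,0) - \frac{G_x(x^*,0)F(x^*,0)}{F_x(x^*,0)} > 0$. For any $x_0 > 0$, Lemma \ref{relative} Case one (valid since $f(0,0)>1$) gives
\[
\check r^{yx}(x_0,0) = G(x^*,0) - \frac{G_x(x^*,0)}{F_x(x^*,0)}F(x^*,0) + \Delta_y(x_0),
\]
and the integrand defining $\Delta_y(x_0)$ is $\int_0^1 (1-t)\,[\,G_{xx}(x_{it},0) - \frac{G_x(x^*,0)}{F_x(x^*,0)}F_{xx}(x_{it},0)\,]\,dt$, whose bracket is exactly $r^y(x_{it})\ge 0$ by \textbf{G3}. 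Since $(1-t)\ge 0$ and $(x_i-x^*)^2 \ge 0$, every term in the $\limsup$ defining $\Delta_y(x_0)$ is nonnegative, so $\Delta_y(x_0) \ge 0$, and therefore $\check r^{yx}(x_0,0) \ge G(x^*,0) - \frac{G_x(x^*,0)F(x^*,0)}{F_x(x^*,0)} > 0$ for every $x_0 > 0$ — in particular on all of $\omega(S_x)$. The symmetric argument with \textbf{G4} (using Case one of Lemma \ref{relative} when $g(0,0)>1$, and the one-sided Case two bound together with $F_y(0,y^*)/G_y(0,y^*)<0$ when $g(0,0)<1$) gives $\check r^{xy}(0,y_0) > 0$ for every $y_0 > 0$. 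Thus the average Lyapunov function $P = x^{\alpha}y^{\beta}$ satisfies, for a suitable choice of $\alpha,\beta>0$ and for each boundary omega-limit point, $\liminf_{n} \frac{1}{n}\sum \ln\big(P(H^{n}(\cdot))/P(\cdot)\big) = \alpha\,\check r^{xy} + \beta\,\check r^{yx} > 0$ (uniformity over the compact $\omega(S)$ coming from continuity of $f,g$ on the compact absorbing set and a covering argument), which is Hutson's hypothesis. Step (iv) then yields a compact interior attractor in $M$, i.e., permanence.

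The main obstacle is the uniformity in step (iii): Hutson's criterion requires the average Lyapunov inequality to hold not just pointwise but uniformly over $\omega(S)$ (equivalently, one needs $\inf$ of the invasion exponents over the boundary to be strictly positive), and the quantities $\check r^{yx}(x_0,0)$ are defined as $\limsup$s along orbits, so some care is needed to pass from the pointwise positive lower bound $G(x^*,0) - \frac{G_x(x^*,0)F(x^*,0)}{F_x(x^*,0)}$ — which is reassuringly independent of $x_0$ — to a genuine uniform bound on the finite-time averages $r_n^{yx}$ for all $n$ large, uniformly in the base point of $\omega(S)$. This is handled by the compactness of $\omega(S)$, the continuity of $F, G$ and their second derivatives on the compact absorbing set, and the fact that $\Delta_y \ge 0$ drops out harmlessly; a secondary subtlety is the bookkeeping at $g(0,0)<1$, where only the inequality form of Case two of Lemma \ref{relative} is available and one must check that the sign condition in \textbf{G4} makes the extra $\liminf r_n^{yy}$-type term work in the favorable direction.
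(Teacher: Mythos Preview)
Your approach is correct and follows the same broad outline as the paper: dissipativity from Lemma~\ref{singleper} and \textbf{G1}, positivity of the external Lyapunov exponents from Lemma~\ref{relative} combined with \textbf{G3}--\textbf{G4}, and then Hutson's theorem. The difference is in how the average Lyapunov function is packaged. The paper does not use a single weighted function $P=x^{\alpha}y^{\beta}$; instead, when $g(0,0)>1$ it simply takes $P(x,y)=xy$ (so $\alpha=\beta=1$, with $\bar r^{xx}=\bar r^{yy}=0$ doing all the work), and when $g(0,0)<1$ it proceeds in \emph{two steps}: first take $P(x,y)=x$, which vanishes only on $S_y$, to show species~$x$ alone is permanent (using just $\check r^{xy}>0$), then restrict the system to $[b,L_m]\times[0,L_m]$ and apply Hutson again with $P(x,y)=y$ (using just $\check r^{yx}>0$ on $S_x$). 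This two-step decoupling completely bypasses the need to balance $\alpha$ against the possibly negative $\check r^{yy}$-contribution on $S_y$, and so eliminates precisely the ``secondary subtlety'' you flag at the end of your proposal. One small correction: your displayed expression $\alpha\,\check r^{xy}+\beta\,\check r^{yx}$ is not the right quantity on either boundary piece --- on $S_x$ the exponent of $P=x^{\alpha}y^{\beta}$ is $\alpha\,\bar r^{xx}+\beta\,\check r^{yx}=\beta\,\check r^{yx}$, and on $S_y$ it is $\alpha\,\check r^{xy}+\beta\,\check r^{yy}$; there is no single combined formula valid on all of $S$.
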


\begin{proof} 
Since the system \eqref{gx}-\eqref{gy} satisfies Condition \textbf{H}-\textbf{G1}, then according to Lemma \ref{singleper},  there exists a positive number $L$, such that for any initial condition $(x_0,y_0)\in X$,  we have $$\limsup_{n\rightarrow\infty}\max\{x_n,y_n\}\leq L.$$
Therefore, the system is dissipative in $X$. In addition, according to Lemma \ref{singleper}, Condition \textbf{H}-\textbf{G1} also implies  $\bar{r}^{xx}(x_0,0)=0 \mbox{ for all } x_0\geq 0$ and 
the following two cases:
\begin{enumerate}
\item $ \bar{r}^{yy}(0,y_0)=0\mbox{ for all } y_0\geq 0$ if $g(0,0)>1$.
\item $ \check{r}^{yy}(0,y_0)\leq0\mbox{ for all } y_0\geq 0$ if $g(0,0)<1$.
\end{enumerate}
 Let $\{(x_k,0)\}_{k=0}^\infty \mbox{ and } \{(0,y_k)\}_{k=0}^\infty$ be positive orbits with initial conditions $x_0>0$ and $y_0>0$ respectively, and 
$$x_{kt}=x^*+(x_k-x^*)t, \,\,y_{kt}=y^*+(y_k-y^*)t, k=1,\cdots,\infty.$$
Then apply Lemma \ref{relative}, we have 
$$\check{r}^{yx}(x_0,0)=G(x_i^*,0)-\frac{F(x^*,0)G_x(x^*,0)}{F_x(x^*,0)}+\Delta_y(x_0)$$
where$$\Delta_y(x_0)= \limsup \frac{\sum_{k=0}^{n-1}(x_k-x^*)^2\int^1_0{(1-t)\left[G_{xx}(x_{kt},0)-\frac{G_x(x^*,0)F_{xx}(x_{kt},0)}{F_{x}(x^*,0)}\right]dt}}{n}$$ and the following two cases depending on the value of $g(0,0)$,
\begin{enumerate}
\item If $g(0,0)>1$, then  $\check{r}^{xy}(0,y_0)=F(0,y^*)-\frac{F_y(0,y^*)G(0,y^*)}{G_y(0,y^*)}+\Delta_x(y_0)$;
\item If $g(0,0)<1$ and $\frac{F_y(0,y^*)}{G_y(0,y^*)}<0$, then $$\check{r}^{xy}(0,y_0)\geq F(0,y^*)-\frac{F_y(0,y^*)G(0,y^*)}{G_y(0,y^*)}+\Delta_x(y_0).$$
 \end{enumerate}
where $$\Delta_x(y_0)=\limsup \frac{\sum_{i=0}^{n-1}(y_k-y^*)^2\int^1_0{(1-t)\left[F_{yy}(0,y_{kt})-\frac{F_y(0,y^*)G_{yy}(0,y_{kt})}{G_{y}(0,y^*)}\right]dt}}{n}.$$
Then according to Condition \textbf{G2},\textbf{G3} and \textbf{G4}, we have 
\begin{enumerate}
\item If $g(0,0)>1$, then
$$\inf_{x_0\geq0}\check{r}^{yx}(x_0,0)>0\mbox{ and } \inf_{y_0\geq 0}\check{r}^{xy}(0,y_0)>0 .$$
\item If $g(0,0)<1$ and $\frac{F_y(0,y^*)}{G_y(0,y^*)}<0$, then 
$$\inf_{x_0\geq0}\check{r}^{yx}(x_0,0)>0\mbox{ and } \inf_{y_0>0}\check{r}^{xy}(0,y_0)>0 .$$

\end{enumerate}

Now we have the following two cases depending on the value of $g(0,0)$,
\begin{enumerate}
\item If $g(0,0)>1$, then define $P(x,y)=x y$, thus
\bea
\frac{P(x_n,y_n)}{P(x_0,y_0)}&=&\prod_{i=0}^{n-1}{f(x_i,y_i)g(x_i,y_i)}=\prod_{i=0}^{n-1} e^{\left(F(x_i,y_i)+G(x_i,y_i)\right)}\\
&=&e^{\sum_{i=0}^{n-1}{\left(F(x_i,y_i)+G(x_i,y_i)\right)}}=e^{n\left[r^{xx}_n(x_0,y_0)+r^{yy}_n(x_0,y_0)\right]}
\eea
Therefore, we have the following inequalities hold
\bea
\sup_{n\geq 0}\liminf_{(x_0,y_0)\in M\rightarrow (x,0) \in S_x}
\frac{P(x_n,y_n)}{P(x_0,y_0)}&\geq&e^{\left(\bar{r}^{xx}(x,0)+\inf_{x\geq0}\check{r}_{yx}(x,0)\right)}>1\\
\sup_{n\geq 0}\liminf_{(x_0,y_0)\in M\rightarrow (0,y) \in S_y}
\frac{P(x_n,y_n)}{P(x_0,y_0)}&\geq&e^{\left(\bar{r}_{yy}(0,y)+\inf_{y\geq0}\check{r}_{xy}(0,y)\right)}>1
\eea
In addition, for all $(x,y)\in S$, we have
\bea
P(x,y)&=&0
\eea
Then by applying Theorem 2.2 of Hutson (1984) to the system \eqref{gx}-\eqref{gy}, we can conclude that the system is permanent in $M$.
\item If $g(0,0)<1$, then define $P(x,y)= x$, thus
\bea
\frac{P(x_n,y_n)}{P(x_0,y_0)}&=&\prod_{i=0}^{n-1}{f(x_i,y_i)}=\prod_{i=0}^{n-1} e^{F(x_i,y_i)}=e^{\sum_{i=0}^{n-1}{F(x_i,y_i)}}=e^{n\left[r^{xy}_n(x_0,y_0)\right]}
\eea
Therefore, we have the following inequalities hold
\bea
\sup_{n\geq 0}\liminf_{(x_0,y_0)\in M\rightarrow (y,0) \in S_y}
\frac{P(x_n,y_n)}{P(x_0,y_0)}&\geq&e^{\inf_{x\geq0}\check{r}_{xy}(0,y)}>1\eea
In addition, for all $(x,y)\in S_x$, we have
\bea P(x,y)&=&0
\eea
Then by applying Theorem 2.2 of Hutson (1984) to the system \eqref{gx}-\eqref{gy}, we can conclude that species $x$ is permanent. This implies that we can restrict the system in the space $[b, L_m] \times [0, L_m]$ with $b>0$. This allows us to apply Hutson's Theorem 2.2 (1984) again to obtain the permanence of species $y$ in the joint system \eqref{gx}-\eqref{gy} by using the average Lyapunov function $P (x, y) = y$. Thus, the system is permanent in $M$.
\end{enumerate}
\end{proof}
\noindent\textbf{Remark:} If the system \eqref{gx}-\eqref{gy} is discrete version of Lokter-Volterra models, then $F(x,0),G(x,0)$ and $F(0,y),G(0,y)$ are linear functions in $x$ and $y$ respectively. Therefore,  Lokter-Volterra models are permanent if Condition \textbf{G2} holds.  

\begin{corollary}[Alternative version of Condition \textbf{G3}-\textbf{G4}]\label{c2}
Assume that the system \eqref{gx}-\eqref{gy} satisfies Condition \textbf{H}, \textbf{G1-G2} and Condition \textbf{G'3-G'4}, then the system is permanent in $X$,  where Condition \textbf{G'3-G'4} are stated as follows:
\begin{description}
\item\textbf{G'3:}  There exists some point $(x^*,0)\in S_x$ such that $F_x(x^*,0)\neq0,\,G(x^*,0)-\frac{G_x(x^*,0)F(x^*,0)}{F_x(x^*,0)}>0$ and for all $x_i>0$, we have
\be\label{G'3}
                               r^y(x_i)=\int^1_0{(1-t)\left[G_{xx}(x_{it},0)-\frac{G_x(x^*,0)F_{xx}(x_{it},0)}{F_x(x^*,0)}\right]dt}\geq 0, \mbox{ where } x_{it}=x^*+(x_i-x^*)t.
                     \ee
\item\textbf{G'4:} If $g(0,0)>1$, then there exists point $(0,y^*)\in S_y$ such that $G_y(0, y^*)\neq 0,\,F(0,y^*)-\frac{F_y(0,y^*)G(0,y^*)}{G_y(0,y^*)}>0$ and the inequality \eqref{G'4} holds for all $y_i>0$,
 \be\label{G'4}
                    r^x(y_i)= \int^1_0{(1-t)\left[F_{yy}(0,y_{it})-\frac{F_y(0,y^*)G_{yy}(0,y_{it})}{G_y(0,y^*)}\right]dt}\geq 0,\mbox{ where } y_{it}=y^*+(y_i-y^*)t.\ee

If $g(0,0)<1$, then $$\frac{F_y(0,y^*)}{G_y(0,y^*)}<0 \mbox{ and } F(0,y^*)-\frac{F_y(0,y^*)G(0,y^*)}{G_y(0,y^*)}>0$$ for some point $(0,y^*)\in S_y$ in addition to \eqref{G'4} holds.
  \end{description}

\end{corollary}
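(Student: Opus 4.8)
The plan is to follow the proof of Theorem \ref{s_per_c} almost verbatim, after observing that Conditions \textbf{G3}--\textbf{G4} enter that proof at exactly one place: they are used only to guarantee that the relative-nonlinearity contributions $\Delta_y(x_0)$ and $\Delta_x(y_0)$ appearing in the external Lyapunov exponents $\check{r}^{yx}(x_0,0)$ of $S_x$ and $\check{r}^{xy}(0,y_0)$ of $S_y$ are nonnegative, which --- together with the strict inequalities $G(x^*,0)-\frac{G_x(x^*,0)F(x^*,0)}{F_x(x^*,0)}>0$ and $F(0,y^*)-\frac{F_y(0,y^*)G(0,y^*)}{G_y(0,y^*)}>0$ and Lemma \ref{relative} (and its symmetric analogue on $S_y$) --- gives $\inf_{x_0\geq 0}\check{r}^{yx}(x_0,0)>0$ and $\inf_{y_0\geq 0}\check{r}^{xy}(0,y_0)>0$. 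So it suffices to show that the weaker Conditions \textbf{G'3}--\textbf{G'4} already force $\Delta_y(x_0)\geq 0$ and $\Delta_x(y_0)\geq 0$.

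For this, recall from Lemma \ref{relative} that for a positive orbit $\{(x_k,0)\}_{k\geq 0}$ with $x_0>0$ and $x_{kt}=x^*+(x_k-x^*)t$,
\[
\Delta_y(x_0)=\limsup_{n\to\infty}\frac{1}{n}\sum_{k=0}^{n-1}(x_k-x^*)^2\,r^y(x_k),\qquad r^y(x_k)=\int_0^1(1-t)\left[G_{xx}(x_{kt},0)-\frac{G_x(x^*,0)F_{xx}(x_{kt},0)}{F_x(x^*,0)}\right]dt,
\]
and this $r^y(x_k)$ is precisely the quantity defined in \eqref{G'3}. By Lemma \ref{singleper} (applicable by Condition \textbf{G1}, exactly as in the proof of Theorem \ref{s_per_c}) the orbit $\{x_k\}$ stays in a fixed compact set, and since $F,G$ are twice continuously differentiable on $X\setminus\{(0,0)\}$ by Condition \textbf{H}, the bracketed integrand is bounded there, so $\Delta_y(x_0)$ is a finite real number. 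Condition \textbf{G'3} says $r^y(x_k)\geq 0$ for all $x_k>0$; since $(x_k-x^*)^2\geq 0$, every summand, hence every finite partial average, is nonnegative, and therefore $\Delta_y(x_0)\geq 0$. The identical computation with $x$ and $y$ interchanged, using Condition \textbf{G'4}, gives $\Delta_x(y_0)\geq 0$; when $g(0,0)<1$ one additionally uses the hypothesis $\frac{F_y(0,y^*)}{G_y(0,y^*)}<0$ to stay in Case two of the symmetric analogue of Lemma \ref{relative} (the $x$-invasion is always in Case one, since $f(0,0)>1$ by Condition \textbf{H}). Combining $\Delta_y(x_0)\geq 0$ with the strict inequality in \textbf{G'3} and with $\bar{r}^{xx}(x_0,0)=0$ (Lemma \ref{singleper}) yields $\inf_{x_0\geq 0}\check{r}^{yx}(x_0,0)>0$, and symmetrically $\inf_{y_0\geq 0}\check{r}^{xy}(0,y_0)>0$, in both the cases $g(0,0)>1$ and $g(0,0)<1$.

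With these two strict-positivity statements established, the remainder of the proof of Theorem \ref{s_per_c} applies without any change: Lemma \ref{singleper} together with \textbf{G1} gives dissipativity and a compact absorbing set $[0,L]^2$; \textbf{G2} controls the nontrivial boundary fixed points; and one invokes Hutson's Theorem 2.2 (1984) with the average Lyapunov function $P(x,y)=xy$ when $g(0,0)>1$, or in two steps (first $P(x,y)=x$ to obtain permanence of species $x$, then restrict the system to $[b,L_m]\times[0,L_m]$ with $b>0$ and use $P(x,y)=y$) when $g(0,0)<1$, yielding permanence in $M$.

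I do not expect a genuine obstacle here; the one point that must be handled with care --- and the reason this is stated as a separate corollary rather than being absorbed into Theorem \ref{s_per_c} --- is the logical direction: \textbf{G3} (pointwise nonnegativity of $G_{xx}(x,0)-\frac{G_x(x^*,0)F_{xx}(x,0)}{F_x(x^*,0)}$ for every $x>0$) is strictly stronger than \textbf{G'3} (nonnegativity of the $(1-t)$-weighted $t$-average $r^y(x_i)$ over each segment $[x^*,x_i]$), since an integrand that changes sign in $x$ may still have nonnegative weighted mean over every such segment. The corollary therefore isolates the fact that only the integrated inequalities \eqref{G'3}--\eqref{G'4}, not their pointwise strengthenings, are actually consumed by the argument.
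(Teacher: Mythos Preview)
Your proposal is correct and matches the paper's intent: the paper itself omits the proof, saying only that it ``is similar as the proof of Theorem \ref{s_per_c} and is straightforward.'' You have correctly identified that Conditions \textbf{G3}--\textbf{G4} are used in that proof solely to force $\Delta_y(x_0)\ge0$ and $\Delta_x(y_0)\ge0$, and that the integrated Conditions \textbf{G'3}--\textbf{G'4} already do this directly since each term $(x_k-x^*)^2 r^y(x_k)$ in the average is nonnegative; the rest of the argument (dissipativity via Lemma~\ref{singleper}, Lemma~\ref{relative} for the external Lyapunov exponents, and Hutson's theorem with $P(x,y)=xy$ or the two-step $P=x$ then $P=y$) carries over verbatim.
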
 
\noindent\textbf{Remark:} The proof of Corollary \ref{c2} is similar as the proof of Theorem \ref{s_per_c} and is straightforward, therefore we omit the details. In the case that $(x^*,0)$ and $(0,y^*)$ are nontrivial boundary equilibria points (i.e., $F(x^*,0)=G(0,y^*)=0$), then the conditions $F(0,y^*)-\frac{F_y(0,y^*)G(0,y^*)}{G_y(0,y^*)}>0$ and $G(x^*,0)-\frac{G_x(x^*,0)F(x^*,0)}{F_x(x^*,0)}>0$ can be dropped.

\section{Sufficient conditions for the permanence of a general prey-predator model}
Let species $x$ be a prey and $y$ be the predator of $x$ in an ecology community and their population densities can be described by the discrete system \eqref{gx}-\eqref{gy}. In a prey-predator model, the predator per capita growth rate is an increasing function of prey abundance and its per capita growth rate is commonly negative at zero prey abundance. Thus, the predator cannot survive without the prey, i.e. $g(0,y)<1$ for all $y\geq 0$. We are interested in a prey-predator model \eqref{gx}-\eqref{gy} satisfies Condition \textbf{H} as well as the following conditions:
\begin{description}
\item \textbf{P1:} $\frac{\partial g(x,y)}{\partial x}>0$ and $g(0,y)>0$ for all $(x,y)\in X$.
\item\textbf{P2:} There exists $a_1,a_2$ such that 
 $$\lim_{x\rightarrow \infty}\sup_{y\geq0}f(x,y)=a_1<1$$ and  $$\mbox{ for any } x>0, \lim_{y\rightarrow\infty}g(x,y)=a_2<1.$$
\item \textbf{P3:} All the nontrivial boundary fixed points are unsaturated, i.e., if $(x^*,0)$ is an nontrivial boundary fixed point, then $g(x^*, 0)>1$.
\item \textbf{P4:} There exists some boundary fixed point $(x^*,0)$ such that 
 \bae\label{P4}
                     r^y(x)&=& G_{xx}(x,0)-\frac{G_x(x^*,0)F_{xx}(x,0)}{F_x(x^*,0)}\geq 0, \mbox{ for all } x>0
\eae
or there exists some nontrivial boundary fixed point $(x^*,0)$ such that for all $x_i>0$, we have
\be\label{P'4}
                               r^y(x)=\int^1_0{(1-t)\left[G_{xx}(x_{it},0)-\frac{G_x(x^*,0)F_{xx}(x_{it},0)}{F_x(x^*,0)}\right]dt}\geq 0, \mbox{ where } x_{it}=x^*+(x_i-x^*)t.
                     \ee
  \end{description}
Condition \textbf{P1-P2} and \textbf{H} guarantees that the prey $x$ is permanent without the predator $y$. In\textbf{P1}, the condition $g(0,y) >0\,\, y\geq 0$ implies that the predator has positive per capita growth rate; In \textbf{P2}, the condition $\lim_{y\rightarrow\infty}g(x,y)=a_2<1$ for any given $x > 0$ implies that for any given prey population size, $x$, the finite rate of increase of the predator drops below 1. Such behavior is normally described as predator interference, i.e. predator individuals interact negatively with one another limiting their ability to hunt prey. Condition \textbf{P3-P4} makes sure that predator $y$ is persistent with presence of prey $x$. We want to show the following theorem:
\begin{theorem}\label{sufficient_pp}
If the system \eqref{gx}-\eqref{gy} satisfies Condition \textbf{H, P1, P2, P3, P4}, then it is permanent in $M$.
\end{theorem}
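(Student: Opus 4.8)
The plan is to follow the skeleton of the proof of Theorem~\ref{s_per_c} in the regime $g(0,0)<1$, which holds automatically here because in a prey--predator model $g(0,y)<1$ for all $y\geq 0$. I would argue in three stages --- dissipativity, permanence of the prey, then permanence of the predator on the restricted system --- each invasion step carried out with an average Lyapunov function and Hutson's Theorem~2.2 (1984). For dissipativity: Condition \textbf{P2} gives $\lim_{x\to\infty}\sup_{y\geq 0}f(x,y)=a_1<1$, so Lemma~\ref{singleper} confines $x_n$ eventually to some interval $[0,L_m]$ and (since $f(0,0)>1$ by \textbf{H}) gives $\bar r^{xx}(x_0,0)=0$ for every $x_0>0$; with $x_n$ trapped in $[0,L_m]$, the second limit in \textbf{P2} then bounds $y_n$ as well --- here one must upgrade the pointwise-in-$x$ limit $\lim_{y\to\infty}g(x,y)=a_2<1$ to a bound uniform over the compact range $[0,L_m]$ (via continuity and an equicontinuity/Dini argument) before rerunning the Lemma~\ref{singleper} estimate. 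Thus $H$ is dissipative and one works on $[0,L_m]^2$.

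\emph{Prey permanence.} On $S_y$ the dynamics is $y_{n+1}=y_ng(0,y_n)$ with $g(0,y)<1$, so $\{y_n\}$ decreases to $0$ and $\omega(S_y)=\{(0,0)\}$. With $P(x,y)=x$ one has $P(x_n,y_n)/P(x_0,y_0)=\prod_{i=0}^{n-1}f(x_i,y_i)$, vanishing precisely on $S_y$. As $(x_0,y_0)\in M$ tends to $(0,y_0)\in S_y$ the orbit converges to the $S_y$-orbit, which tends to $(0,0)$; since $f(0,0)>1$, a Ces\`aro argument gives $\tfrac1n\sum_{i=0}^{n-1}\ln f(0,\bar y_i)\to\ln f(0,0)>0$, whence $\sup_n\liminf_{(x_0,y_0)\to(0,y_0)}\prod_{i=0}^{n-1}f(x_i,y_i)=+\infty>1$. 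Hutson's Theorem~2.2 yields $b>0$ with $\liminf_n x_n\geq b$ for all $(x_0,y_0)\in M$, so the dynamics can be restricted to the forward-invariant absorbing set $[b,L_m]\times[0,L_m]$, whose boundary relative to $M$ is the prey segment $\{(x,0):b\leq x\leq L_m\}\subseteq S_x$.

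\emph{Predator permanence.} Let $(x^*,0)$ be the nontrivial boundary fixed point furnished by \textbf{P4}, so $x^*>0$, $F(x^*,0)=0$, and $F_x(x^*,0)\neq 0$. Since $f(0,0)>1$ we are in ``Case one'' of Lemma~\ref{relative}, so Corollary~\ref{c:relative} gives $\check r^{yx}(x_0,0)=G(x^*,0)+\Delta_y(x_0)$ for every $x_0>0$. Now \textbf{P3} gives $G(x^*,0)=\ln g(x^*,0)>0$, while \textbf{P4} --- through \eqref{P4} or \eqref{P'4}, using that every $x_{kt}$ is a convex combination of positive numbers and hence positive --- forces $\Delta_y(x_0)\geq 0$; hence $\inf_{x_0>0}\check r^{yx}(x_0,0)\geq G(x^*,0)>0$. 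With $P(x,y)=y$ on the restricted system, $P(x_n,y_n)/P(x_0,y_0)=\prod_{i=0}^{n-1}g(x_i,y_i)$ vanishes on the prey boundary, and as $(x_0,y_0)\to(x,0)$ the orbit approaches the $S_x$-orbit, along which $\limsup_n\tfrac1n\sum_{i=0}^{n-1}\ln g(x_i,0)=\check r^{yx}(x,0)\geq G(x^*,0)>0$, so $\sup_n\liminf\prod_{i=0}^{n-1}g(x_i,y_i)=+\infty>1$. Hutson's Theorem~2.2 once more bounds $\liminf_n y_n$ away from $0$, and combining the stages gives $0<\underline b<B$ with $\underline b\leq\liminf\min\{x_n,y_n\}\leq\limsup\max\{x_n,y_n\}\leq B$ for every $(x_0,y_0)\in M$ --- permanence in $M$.

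The main obstacle is not any single computation but the bookkeeping in the two Hutson-type invasion conditions: one must verify the $\sup_n\liminf$ inequalities along boundary orbits carefully, so that the relevant time average collapses to $F(0,0)$ along $S_y$ and is bounded below by $G(x^*,0)$ along $S_x$, and one must respect the order of the stages --- first pin down the prey lower bound $b$ and re-coordinatize on $[b,L_m]\times[0,L_m]$, and only then invade with the predator --- because the corner $(0,0)$ lies in the closure of both axes and would otherwise be mishandled. A secondary nuisance is the pointwise-to-uniform upgrade of \textbf{P2} required for the dissipativity of the predator.
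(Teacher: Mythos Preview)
Your three-stage architecture --- dissipativity, prey permanence via $P(x,y)=x$, then predator permanence via $P(x,y)=y$ on the restricted box --- is exactly the paper's, and your invasion computations (in particular the use of Corollary~\ref{c:relative} with \textbf{P3}--\textbf{P4} to get $\check r^{yx}(x_0,0)\geq G(x^*,0)>0$) are correct and in fact more explicit than the paper's own write-up.

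There is, however, one genuine gap: your dissipativity argument for the predator. You propose to upgrade the pointwise limit $\lim_{y\to\infty}g(x,y)=a_2<1$ to a bound uniform over $x\in[0,L_m]$ ``via continuity and an equicontinuity/Dini argument''. Neither tool is available from the hypotheses: Dini requires monotone convergence in $y$, which is not assumed, and no equicontinuity of the family $\{g(\cdot,y)\}_{y\geq 0}$ is given either. Without such a uniform bound the Lemma~\ref{singleper}-type estimate for $y$ cannot be rerun. The paper closes this gap using precisely the part of \textbf{P1} you did not exploit, namely $\partial g/\partial x>0$: once $x_n\leq L_m$ for $n\geq N$ one has $y_{n+1}=y_n g(x_n,y_n)\leq y_n g(L_m,y_n)$, and now only the single limit $\lim_{y\to\infty}g(L_m,y)=a_2<1$ is needed to bound $y_n$ (via Lemma~B.1 of Kang and Chesson 2010). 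So the monotonicity in \textbf{P1} is not decorative --- it is what makes predator dissipativity go through. Replace your equicontinuity/Dini sentence with this one-line comparison and the proof is complete.
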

\begin{proof} Since the system \eqref{gx}-\eqref{gy} satisfies Condition \textbf{P1-P2}, then by applying Lemma \ref{singleper}, we know that there exists a positive number $L_m$, such that for any initial condition $(x_0,y_0)\in X$,  we have $$\limsup_{n\rightarrow\infty}x_n\leq L_m \mbox{ and }\bar{r}^{xx}(x_0,0)=0 \mbox{ for all } x_0\geq 0.$$
This implies that for any $(x_0,y_0)\in X$, there exists a positive integer $N$, such that  $$x_n\leq L_m \mbox{ for all } n>N.$$ 
By using the fact that $g(x,y)$ is increasing with respect to $x$, we can conclude that there exists $N$ large enough such that 
$$y_{n+1}=y_n g(x_n,y_n)\leq y_n g(L_m, y_n), \mbox{ for all } n>N.$$
Since  and for any given $x>0$, we have
$$\lim_{y\rightarrow\infty}g(x,y)=a_2<1.$$
Therefore, we can apply Lemma B.1 (Kang and Chesson 2010) to conclude that there exists a positive number $B>L_m$ such that $$\limsup_{n\rightarrow\infty}y_n\leq B.$$ Therefore, the system is dissipative in $M$. 

Now we can show that prey $x$ is persistent in the joint system \eqref{gx}-\eqref{gy} by applying Hutson's Theorem 2.2 and its Corollary 2.3 (1984) with an average Lyapunov function $P (x, y)=x$. Then, we can restrict the system in the space $[b, L_m] \times [0, B]$. This allows us to apply Hutson's Theorem 2.2 (1984) again to obtain permanence of $y$ in the joint system by using the average Lyapunov function $P (x, y) = y$. Therefore, the system is permanent in $M$. Hence, we have shown the statement. 
\end{proof}
\noindent\textbf{Remark:} Note that $\frac{\partial g(x,y)}{\partial x}>0$, thus for any nontrivial boundary fixed point $(x^*,0)$, we have $G_x(x^*,0)>0$, thus there are the following two simplified cases:

\begin{enumerate}
\item If both $F(x,0)$ and $G(0,x)$ are convex (or linear), then $F_x(x^*,0)<0$ and $G(x^*,0)>0$ indicates that the system \eqref{gx}-\eqref{gy} is permanent.
\item If $F(x,0)$ is convex (or linear) and $G(x,0)$ is concave (or linear), then $F_x(x^*,0)>0$ and $G(x^*,0)>0$ indicates that the system \eqref{gx}-\eqref{gy} is permanent.
\end{enumerate}
 \section{Applications}
 In this section, we apply the results to particular competition and prey-predator models. 
 \subsection{A competition model with strong Allee effects in one species}
Multiple stable states occur when more than one type of community can stably persist in a single environmental regime. Simple theoretical analyses predict multiple stable states for single species dynamics via strong Allee effects. Perhaps the most common Allee effect occurs in species subject to predation by a generalist predator with a saturating functional response (Schreiber 2003). The population dynamics of a species suffering Allee effects due to predator saturation can be modeled as 
\bae\label{Predation_Allee}
N_{t+1}&=& N_t e^{r(1-N_t/K)}I(N_t)
\eae where $I(N)=e^{-\frac{m}{1+s N}}$ represents the probability of escaping predation by a predator with a saturating functional response and $m, s$ represents predation intensity and the proportion to the handling time (Hassell \emph{et al.} 1976) respectively. Non-dimensionalizing \eqref{Predation_Allee} by setting $y_t=N_t/K$ and $b=sK$ gives
\begin{equation}\label{predation}y_{t+1}=y_t e^{r(1-y_t)-\frac{m}{1+by_t}}
\end{equation}whose dynamics depends exclusively on the quantities $r$; $b$; and $m$. If  $r<m<\frac{r(b+1)^2}{4b}$ and $b>1$, then \eqref{predation} suffers from strong Allee effects which leads to two positive equilibria that can be expressed as follows:
$$y=\frac{r(b-1)\pm\sqrt{r^2(1+b)^2-4mbr}}{2rb}.$$
Assume that the population dynamics of species $x$ and $y$ can be modeled as follows:
\bae\label{strongAllee_x}
x_{t+1}&=&x_t e^{r_1(1-x_t)-\frac{m_1y_t}{1+b_1y_t}}\\
\label{strongAllee_y}
y_{t+1}&=&y_t e^{r_2(1-y_t)-\frac{m_2}{1+b_2y_t}-a x_t (x_t-c)}
\eae where $r_i, i=1,2$ represents the intrinsic growth rates of species $x$ and $y$ respectively; the term  $e^{-\frac{m_1y_t}{1+b_1y_t}}$ represents that species $x$ suffers the saturated inter-competition from species $y$; the term $e^{-a x (x-c)}$ represents that species $y$ benefits from species $x$ if the abundance of $x$ below the threshold $c$, otherwise, species $y$ suffers inter-competition from $x$. This system \eqref{strongAllee_x}-\eqref{strongAllee_y} models the following two features of the interactions between two species:
\begin{enumerate}
\item Multiple equilibria: the species $y$ suffers strong Allee effects in the absence of species $x$, i.e., $r_2<m_2<\frac{r_2(b_2+1)^2}{4b_2}$ and $b_2>1$.
\item Threshold: If the population of species $x$ is below $c$, then species $y$ benefits from the presence of species $x$; however, if the population of species $x$ is above $c$, then species $y$ suffers from inter-competition from $x$.
\end{enumerate}
It is easy to check that the system \eqref{strongAllee_x}-\eqref{strongAllee_y} satisfies Condition \textbf{H-G1}.  The boundary fixed points of the system are
$$(0,0), (1,0), \left(0,y_1^* \right) \mbox{ and }\left(0,y_2^* \right)$$ where
$$y_1^*=\frac{r_2(b_2-1)-\sqrt{r_2^2(1+b_2)^2-4m_2b_2r_2}}{2r_2b_2}\mbox{ and } y_2^*=\frac{r_2(b_2-1)+\sqrt{r_2^2(1+b_2)^2-4m_2b_2r_2}}{2r_2b_2}.$$
Thus, if $r_2-m_2+a(c-1)>0$ and $r_1-\frac{m_1 y_2^*}{1+b_1y_2^*}>0$, then the system \eqref{strongAllee_x}-\eqref{strongAllee_y}  satisfies Condition \textbf{G2}. Let
$$F(x,y)=r_1(1-x)-\frac{m_1y}{1+b_1y} \,\,\mbox{ and } \,\,G(x,y)=r_2(1-y)-\frac{m_2}{1+b_2y}-a x(x-c).$$
Then $$\begin{array}{llll}
F_x(x,0)=-r_1,&F_{xx}(x,0)=0,&F_y(0,y)=-\frac{m_1}{(1+b_1y)^2}, &F_{yy}(0,y)=\frac{2b_1m_1}{(1+b_1y)^3}\\
G_x(x,0)=a(c-2x),&G_{xx}(x,0)=-2a, &G_y(0,y)=-r_2+\frac{b_2m_2}{(1+b_2y)^2}, &G_{yy}(0,y)=-\frac{2b_2^2m_2}{(1+b_2y)^3}
\end{array}
$$Notice that $$F(0,0)=\ln f(0,0)>0, \,\,G(0,0)=\ln g(0,0)<0,\,\, \frac{F_y(0,0)}{G_y(0,0)}=-\frac{m_1}{b_2m_2-r_2}<0$$ and $$F(0,0)- \frac{F_y(0,0)G(0,0)}{G_y(0,0)}=r_1-\frac{m_1(m_2-r_2)}{b_2m_2-r_2},$$then the system satisfies Condition \textbf{G4} (or \textbf{G'4}) if $r_1>m_1$ and 
$$F_{yy}(0,y)-\frac{F_y(0,0)G_{yy}(0,y)}{G_y(0,0)}=\frac{2b_1m_1}{(1+b_1y)^3}-\frac{2m_1b_2^2m_2}{(b_2m_2-r_2)(1+b_2y)^3}\geq 0.$$This implies that if $$b_1\leq b_2\mbox{ and }m_1(b_2m_2-r_2)>b_2^2m_2,$$then the system satisfies Condition \textbf{G4}. 
In addition, we have 
 $$r^y(x_i)=\int^1_0{(1-t)\left[G_{xx}(x_{it},0)-\frac{G_x(1,0)F_{xx}(x_{it},0)}{F_x(1,0)}\right]}dt=- a.$$
Since $0<x_i<\frac{e^{r_1-1}}{r_1}, i=1,\cdots,\infty$, thus if
$$G(1,0)+(x_i-1)^2r^y(x_i)\geq a(c-1)+r_2-m_2-\max\{a, a\left(\frac{e^{(r_1-1)}}{r_1}-1\right)^2\}$$ holds, then the system satisfies Condition \textbf{G'3}. Therefore, based on the discussion above, we can apply Theorem \ref{s_per_c} and Corollary \ref{c2} to gain the following corollary,
 
\begin{corollary}\label{model_strongAllee}
The system \eqref{strongAllee_x}-\eqref{strongAllee_y} is permanent in $M$, if the following conditions hold
\begin{description}
\item Condition 1: $r_2<m_2<\frac{r_2(b_2+1)^2}{4b_2}$ and $b_2>1$
\item Condition 2: $a(c-1)+r_2-m_2-\max\{a, a\left(\frac{e^{(r_1-1)}}{r_1}-1\right)^2\}>0$ and $r_1>\max\{m_1,\frac{m_1}{b_1}\}$
\item Condition 3: $b_1\leq b_2\mbox{ and }m_1(b_2m_2-r_2)>b_2^2m_2$
\end{description}
\end{corollary}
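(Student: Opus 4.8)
The plan is to run the system \eqref{strongAllee_x}--\eqref{strongAllee_y} through the machinery of Section~2 (Lemma~\ref{singleper} for dissipativity, Lemma~\ref{relative} and Corollary~\ref{c:relative} for the external Lyapunov exponents) and then close with Hutson's Theorem~2.2 (1984) exactly as in the $g(0,0)<1$ branch of the proof of Theorem~\ref{s_per_c}; equivalently, to check Conditions \textbf{H}, \textbf{G1}, \textbf{G2}, \textbf{G'3}, \textbf{G'4} of Theorem~\ref{s_per_c} and Corollary~\ref{c2}, with one genuine caveat on \textbf{G'3} discussed below. Write $F=\ln f$ and $G=\ln g$, so $F(x,y)=r_1(1-x)-\frac{m_1y}{1+b_1y}$ and $G(x,y)=r_2(1-y)-\frac{m_2}{1+b_2y}-ax(x-c)$. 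Condition~\textbf{H} is immediate: $f,g$ are smooth and strictly positive on $X$, $f(0,0)=e^{r_1}>1$, $g(0,0)=e^{r_2-m_2}\in(0,1)$ by Condition~1, and the stated limits exist and are continuous. Condition~\textbf{G1} holds because $\frac{m_1y}{1+b_1y}\ge 0$ forces $\sup_{y\ge 0}f(x,y)=e^{r_1(1-x)}\to 0$ as $x\to\infty$, while $-ax(x-c)\le\frac{ac^2}{4}$ forces $\sup_{x\ge 0}g(x,y)\le e^{r_2(1-y)-\frac{m_2}{1+b_2y}+ac^2/4}\to 0$ as $y\to\infty$; hence Lemma~\ref{singleper} bounds each of $x_n$ and $y_n$, the system is dissipative, and the dynamics eventually lies in a compact box $[0,L_m]^2$.

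Next I would dispose of Condition~\textbf{G2}. The nontrivial boundary fixed points are $(1,0)$, $(0,y_1^*)$ and $(0,y_2^*)$, so by monotonicity of $y\mapsto\frac{m_1y}{1+b_1y}$ and $y_1^*<y_2^*$ it suffices to verify $g(1,0)>1$ and $f(0,y_2^*)>1$. Here $G(1,0)=a(c-1)+r_2-m_2$, which exceeds $\max\{a,a(\frac{e^{r_1-1}}{r_1}-1)^2\}>0$ by Condition~2, so $g(1,0)>1$; and $F(0,y_2^*)>r_1-\frac{m_1}{b_1}>0$ by Condition~2, so $f(0,y_2^*)>1$.

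The heart of the argument is the two external Lyapunov exponents. For invasion of $S_x$ by $y$, take $x^*=1$; since $f(0,0)>1$ and $F(1,0)=0$, Case one of Corollary~\ref{c:relative} gives $\check{r}^{yx}(x_0,0)=G(1,0)+\Delta_y(x_0)$. Because $F(x,0)$ is affine we have $F_{xx}\equiv 0$ and $G_{xx}(x,0)\equiv-2a$, so $r^y(x)=\int_0^1(1-t)(-2a)\,dt=-a$ and $\Delta_y(x_0)=-a\,\limsup_n\frac1n\sum_{k<n}(x_k-1)^2$. This is the one place where the literal hypothesis \textbf{G'3} (namely $r^y\ge 0$) fails, so Corollary~\ref{c2} cannot be quoted verbatim; the remedy is the a priori bound on the single-species orbit $x_{k+1}=x_ke^{r_1(1-x_k)}$, which forces $x_k\le\frac{e^{r_1-1}}{r_1}$ for every $k\ge1$, hence $(x_k-1)^2\le\max\{1,(\frac{e^{r_1-1}}{r_1}-1)^2\}$ and $\Delta_y(x_0)\ge-\max\{a,a(\frac{e^{r_1-1}}{r_1}-1)^2\}$. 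Feeding this into $\check{r}^{yx}(x_0,0)=G(1,0)+\Delta_y(x_0)$ and invoking Condition~2 yields $\inf_{x_0\ge 0}\check{r}^{yx}(x_0,0)\ge a(c-1)+r_2-m_2-\max\{a,a(\frac{e^{r_1-1}}{r_1}-1)^2\}>0$. For invasion of $S_y$ by $x$, since $g(0,0)<1$ I would use Case two of Lemma~\ref{relative}, in its $S_y$ form, at $y^*=0$: one computes $\frac{F_y(0,0)}{G_y(0,0)}=\frac{-m_1}{b_2m_2-r_2}<0$ (valid since $b_2m_2>m_2>r_2$ by Condition~1), so $\check{r}^{xy}(0,y_0)\ge F(0,0)-\frac{F_y(0,0)G(0,0)}{G_y(0,0)}+\Delta_x(y_0)=r_1-\frac{m_1(m_2-r_2)}{b_2m_2-r_2}+\Delta_x(y_0)$. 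Condition~3 (the part $b_1\le b_2$ makes $(1+b_1y)^3\le(1+b_2y)^3$, the coefficient inequality finishes it) makes the relative nonlinearity $F_{yy}(0,y)-\frac{F_y(0,0)}{G_y(0,0)}G_{yy}(0,y)=\frac{2b_1m_1}{(1+b_1y)^3}-\frac{2m_1b_2^2m_2}{(b_2m_2-r_2)(1+b_2y)^3}$ nonnegative for all $y\ge 0$, so $\Delta_x(y_0)\ge 0$; and $\frac{m_2-r_2}{b_2m_2-r_2}<1$ because $b_2>1$, so $r_1-\frac{m_1(m_2-r_2)}{b_2m_2-r_2}>r_1-m_1>0$ by Condition~2. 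Hence $\inf_{y_0\ge 0}\check{r}^{xy}(0,y_0)>0$.

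With both external Lyapunov exponents bounded below by a positive constant and dissipativity already established, the conclusion is verbatim the $g(0,0)<1$ case of the proof of Theorem~\ref{s_per_c}: apply Hutson's Theorem~2.2 with the average Lyapunov function $P(x,y)=x$ to obtain permanence of $x$, restrict the dynamics to $[b,L_m]\times[0,L_m]$ with $b>0$, and apply Hutson's Theorem~2.2 once more with $P(x,y)=y$ to obtain permanence of $y$; therefore the system is permanent in $M$. I expect the only non-routine step to be precisely the a priori bound $x_k\le\frac{e^{r_1-1}}{r_1}$ that converts the negative relative nonlinearity $r^y\equiv-a$ into a still-positive external Lyapunov exponent via Condition~2; everything else is elementary differentiation of $F$ and $G$ and the chain of inequalities encoded in Conditions~1--3.
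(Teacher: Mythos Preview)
Your proposal is correct and follows essentially the same route as the paper: verify \textbf{H}, \textbf{G1}, \textbf{G2}, then handle the $S_y$ side via \textbf{G4}/\textbf{G'4} at $y^*=0$ using Condition~3, and on the $S_x$ side compute $r^y\equiv -a$ and compensate for its negativity with the Ricker a priori bound $x_k\le e^{r_1-1}/r_1$ together with Condition~2, finishing with the $g(0,0)<1$ branch of Theorem~\ref{s_per_c}. If anything you are more careful than the paper, which asserts that ``the system satisfies Condition \textbf{G'3}'' even though the literal hypothesis $r^y\ge 0$ fails; your explicit acknowledgment that Corollary~\ref{c2} cannot be quoted verbatim, and your direct estimate $\Delta_y(x_0)\ge -\max\{a,a(e^{r_1-1}/r_1-1)^2\}$ feeding into Corollary~\ref{c:relative}, is exactly the argument the paper intends.
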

\noindent\textbf{Remark:} Examples of parameter's values that satisfy conditions in Corollary \ref{model_strongAllee} are$$a=m_2=b_1=1, b_2=2.5>1,m_1=4, r_2=0.85<1, r1=4.1, c>21.$$
\subsection{A competition model with weak Allee effects in both species}
A species suffering weak Allee effects due to predator saturation can be modeled by \eqref{predation} when $r>m$. Then a two species competition model that is subject to weak Allee effects due to predator saturation for both species can be modeled as
\bae\label{weakAllee_x}
x_{t+1}&=&x_t e^{r_1(1-x_t)-\frac{m_1}{1+b_1x_t}-a_1y_t}\\
\label{weakAllee_y}
y_{t+1}&=&y_t e^{r_2(1-y_t)-\frac{m_2}{1+b_2y_t}-a_2 x_t}
\eae where $a_i, i=1,2$ are parameters that measure inter-competition between two species. If $r_i>m_i, i=1,2$, then the boundary fixed points of the system \eqref{weakAllee_x}-\eqref{weakAllee_y} are $$(0,0), (x^*,0) \mbox{ and } (0,y^*)$$ where
$$x^*=\frac{r_1(b_1-1)+\sqrt{r_1^2(1+b_1)^2-4m_1b_1r_1}}{2r_1b_1}\mbox{ and }y^*=\frac{r_2(b_2-1)+\sqrt{r_2^2(1+b_2)^2-4m_2b_2r_2}}{2r_2b_2}.$$
It is easy to check that the system \eqref{strongAllee_x}-\eqref{strongAllee_y} satisfies Condition \textbf{H-G1}. Let
$$F(x,y)=r_1(1-x)-\frac{m_1}{1+b_1x}-a_1y \mbox{ and } G(x,y)=r_2(1-y)-\frac{m_2}{1+b_2y}-a_2 .$$
Then $F(0,0)=\ln f(0,0)>0, G(0,0)=\ln g(0,0)>0$ and $$\begin{array}{llll}
F_x(x,0)=-r_1+\frac{b_1m_1}{(1+b_1x)^2},&F_{xx}(x,0)=-\frac{2b_1^2m_1}{(1+b_1x)^3}
,&F_y(0,y)=-a_1, &F_{yy}(0,y)=0\\
G_x(x,0)=-a_2,&G_{xx}(x,0)=0, &G_y(0,y)=-r_2+\frac{b_2m_2}{(1+b_2y)^2}, &G_{yy}(0,y)=-\frac{2b_2^2m_2}{(1+b_2y)^3}
\end{array}$$
Notice that
$$F_{yy}(0,y)-\frac{F_y(0,y^*)G_{yy}(0,y)}{G_y(0,y^*)}=\frac{2a_1b_2^2m_2}{(r_2-\frac{b_2m_2}{(1+b_2y^*)^2})(1+b_2y)^3}$$ and
$$G_{xx}(x,0)-\frac{G_x(x^*,0)F_{xx}(x,0)}{F_x(x^*,0)}=\frac{2a_2b_1^2m_1}{(r_1-\frac{b_1m_1}{(1+b_1x^*)^2})(1+b_1x)^3},$$
thus, if $r_2>\frac{b_2m_2}{(1+b_2y^*)^2}$ and $r_1>\frac{b_1m_1}{(1+b_1x^*)^2}$, then the system \eqref{weakAllee_x}-\eqref{weakAllee_y} satisfies Condition \textbf{G3-G4}.
In addition, if $F(0,y^*)>0$ and $G(x^*,0)>0$, then the system \eqref{strongAllee_x}-\eqref{weakAllee_y} satisfies Condition \textbf{G2}.Therefore, based on the discussion above, we can apply Theorem \ref{s_per_c} to gain the following corollary,
 
\begin{corollary}\label{model_weakAllee}
The system \eqref{weakAllee_x}-\eqref{weakAllee_y} is permanent in $M$, if the following conditions hold
\begin{description}
\item Condition 1: $r_i>m_i, i=1,2$
\item Condition 2: $F(0,y^*)>0$ and $G(x^*,0)>0$
\item Condition 3: $r_2>\frac{b_2m_2}{(1+b_2y^*)^2}$ and $r_1>\frac{b_1m_1}{(1+b_1x^*)^2}$
\end{description}
\end{corollary}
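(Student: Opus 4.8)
\emph{Proof proposal.} The plan is to check that the model \eqref{weakAllee_x}--\eqref{weakAllee_y} satisfies every hypothesis of Theorem \ref{s_per_c}, namely Condition \textbf{H} together with \textbf{G1}--\textbf{G4}, and then invoke that theorem directly. Condition \textbf{H} is immediate: $f$ and $g$ are exponentials of smooth functions, hence strictly positive and twice differentiable on $X\setminus\{(0,0)\}$; the limits $\lim_{x\to0}f(x,y)=f(0,y)$ and $\lim_{y\to0}g(x,y)=g(x,0)$ exist and are continuous; and Condition 1 ($r_i>m_i$) gives $f(0,0)=e^{r_1-m_1}>1$ and $g(0,0)=e^{r_2-m_2}>1$. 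For Condition \textbf{G1}, note that $f(x,y)$ is decreasing in $y$, so $\sup_{y\geq0}f(x,y)=e^{r_1(1-x)-m_1/(1+b_1x)}\to 0$ as $x\to\infty$, and likewise $\sup_{x\geq0}g(x,y)\to 0$ as $y\to\infty$; thus $a_1=a_2=0<1$.

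Next I would pin down the boundary dynamics. Under Condition 1 the equation $F(x,0)=0$ reduces to a quadratic whose roots have product $(m_1-r_1)/(r_1b_1)<0$, so exactly one root is positive, namely the $x^*$ displayed in the text; similarly $G(0,y)=0$ has the unique positive root $y^*$. Hence the nontrivial boundary fixed points are precisely $(x^*,0)$ and $(0,y^*)$, and Condition \textbf{G2} is then exactly Condition 2, since $f(0,y^*)>1$ is equivalent to $F(0,y^*)>0$ and $g(x^*,0)>1$ is equivalent to $G(x^*,0)>0$.

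For Condition \textbf{G3} I would take the distinguished point to be the boundary fixed point $(x^*,0)$. Since $F(x^*,0)=0$, the quantity $G(x^*,0)-\frac{G_x(x^*,0)F(x^*,0)}{F_x(x^*,0)}$ collapses to $G(x^*,0)>0$, and Condition 3 gives $F_x(x^*,0)=-r_1+\frac{b_1m_1}{(1+b_1x^*)^2}<0\neq0$. Because $G_{xx}(x,0)\equiv0$, $G_x(x^*,0)=-a_2<0$ and $F_{xx}(x,0)=-\frac{2b_1^2m_1}{(1+b_1x)^3}<0$, the extended relative nonlinearity is
$$r^y(x)=-\frac{G_x(x^*,0)F_{xx}(x,0)}{F_x(x^*,0)}=\frac{2a_2b_1^2m_1}{\bigl(r_1-\frac{b_1m_1}{(1+b_1x^*)^2}\bigr)(1+b_1x)^3}>0\quad\text{for all }x>0,$$
so \textbf{G3} holds. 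Symmetrically, since $g(0,0)>1$ we are in the first branch of Condition \textbf{G4}, and I would take $(0,y^*)$: $G(0,y^*)=0$ makes $F(0,y^*)-\frac{F_y(0,y^*)G(0,y^*)}{G_y(0,y^*)}$ reduce to $F(0,y^*)>0$, Condition 3 gives $G_y(0,y^*)=-r_2+\frac{b_2m_2}{(1+b_2y^*)^2}<0\neq0$, and with $F_{yy}(0,y)\equiv0$, $F_y(0,y^*)=-a_1$, $G_{yy}(0,y)=-\frac{2b_2^2m_2}{(1+b_2y)^3}$ one obtains $r^x(y)=\frac{2a_1b_2^2m_2}{(r_2-\frac{b_2m_2}{(1+b_2y^*)^2})(1+b_2y)^3}>0$ for all $y>0$. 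With all hypotheses of Theorem \ref{s_per_c} verified, the system is permanent in $M$.

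There is no real obstacle here; the only thing requiring care is the bookkeeping of signs, since $F_x(x^*,0)$, $G_y(0,y^*)$, $F_{xx}$, $G_{yy}$, $F_y=-a_1$ and $G_x=-a_2$ are \emph{all} negative, and the relative-nonlinearity terms $r^y(x),r^x(y)$ come out nonnegative precisely because each is a ratio of two negative quantities. It is also worth one sentence to justify the uniqueness of the positive boundary roots $x^*,y^*$ under the weak-Allee hypothesis $r_i>m_i$, which is exactly what rules out additional nontrivial boundary fixed points that would otherwise have to be tested in Condition \textbf{G2}.
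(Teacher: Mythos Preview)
Your proposal is correct and follows essentially the same route as the paper: verify Conditions \textbf{H}, \textbf{G1}--\textbf{G4} for the model \eqref{weakAllee_x}--\eqref{weakAllee_y} using the displayed partial derivatives, take $(x^*,0)$ and $(0,y^*)$ as the distinguished boundary points (so that $F(x^*,0)=G(0,y^*)=0$ collapse the constant terms in \textbf{G3}--\textbf{G4} to $G(x^*,0)$ and $F(0,y^*)$), and then invoke Theorem~\ref{s_per_c}. Your write-up is in fact a bit more explicit than the paper's in checking \textbf{H} and \textbf{G1}, in noting that $g(0,0)>1$ selects the first branch of \textbf{G4}, and in arguing uniqueness of the positive boundary roots via the sign of the product of the roots; these are welcome clarifications but do not change the strategy.
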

\noindent\textbf{Remark:} Notice that $F_x(x,0)$ and $G_y(0,y)$ will change signs as $x$ and $y$ increasing to certain thresholds. Thus, permanence theorems in Kang and Chesson (2010) and in Kon (2004) fail while Theorem \ref{s_per_c} applies.
\subsection{A prey-predator model}
One-dimensional heuristic models of single-species population dynamics subject to the weak Allee effect can be represented as \eqref{h_wAllee} (Lewis and Kareiva 1993; Amarasekare 1998a\&1998b; Keitt \emph{et al.} 2001)
\bae\label{h_wAllee}
\frac{dx}{dt}&=&x\left(r-b(x-a)^2\right)
\eae where all the parameters are strictly positive and $r> ba^2$. Then the discrete version of \eqref{h_wAllee} can be represented as
\bae\label{dh_wAllee}
x_{t+1}&=&x_t e^{\left(r-b(x_t-a)^2\right)}
\eae 

Let $x_t$ and $y_t$ represent the population density of prey $x$ and predator $y$ at generation
$t$ respectively. Then a prey-predator model with prey subject to weak Allee effects can be defined as
\bae\label{px}
 x_{t+1}&=& x_t e^{\left(r-b(x_t-a)^2\right)-c_1y_t}\\
 \label{py}
  y_{t+1}&=&y_t e^{c_2 x_t^2-d y_t}
\eae where all the parameters are strictly positive and $r>b a^2$. The fact that all the parameters are nonnegative implies that this prey-predator system \eqref{px}-\eqref{py} satisfies Condition \textbf{H, P1, P2}. Thus, \eqref{px}-\eqref{py} is dissipative. Since $r>b a^2$, then the only nontrivial boundary equilibrium is $$(x^*,0)=(a+\sqrt{r/b},0).$$
Therefore, the system satisfies Condition \textbf{P3} since
$$ g(x^*,0)=e^{c_2 (x^*)^2}>1.$$
 Let
$$F(x,y)=r- b(x-a)^2- c_1y\,\, \mbox{ and }\,\, G(x,y)=c_2 x^2-d y.$$
Then $F(0,0)=\ln f(0,0)=r- b a^2>0, G(0,y)=\ln g(0,y)=- d y\leq 0$ and $$\begin{array}{llll}
F_x(x,0)=-2b(x-a),&F_{xx}(x,0)=-2b
,&F_y(0,y)=-c_1, &F_{yy}(0,y)=0\\
G_x(x,0)=2c_2x,&G_{xx}(x,0)=2c_2, &G_y(0,y)=-d, &G_{yy}(0,y)=0
\end{array}$$
This implies that the system satisfies Condition \textbf{P4} if 
$$r^y(x)=G_{xx}(x,0)-\frac{G_x(x^*,0)}{F_x(x^*,0)} F_{xx}(x,0)=2c_2-\frac{2c_2x^*}{b(a-x^*)}=2c_2(1-\frac{a+\sqrt{r/b}}{\sqrt{rb}})\geq 0$$
Therefore, we have the following corollary by applying Theorem \ref{sufficient_pp}:
\begin{corollary}\label{model_p}
Assume that all the parameters of the system \eqref{px}-\eqref{py} are strictly positive and $r>b a^2.$ Then the system is permanent in $M$, if the following conditions hold
$$1-\frac{a+\sqrt{r/b}}{\sqrt{rb}}\geq 0\,\, (\mbox{   i.e.}, a^2<r(b+1/b-2)).$$\end{corollary}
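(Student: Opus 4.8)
The plan is to show that the prey--predator system \eqref{px}--\eqref{py} satisfies the hypotheses of Theorem \ref{sufficient_pp}, namely Conditions \textbf{H}, \textbf{P1}, \textbf{P2}, \textbf{P3} and \textbf{P4}, and then to invoke that theorem directly. Set $F(x,y)=\ln f(x,y)=r-b(x-a)^2-c_1y$ and $G(x,y)=\ln g(x,y)=c_2x^2-dy$. Since $f$ and $g$ are exponentials of polynomials, they are strictly positive and smooth on $X$; moreover $f(0,0)=e^{\,r-ba^2}>1$ because $r>ba^2$, $g(0,0)=1\ge0$, and the limits $\lim_{x\to0}f(x,y)$, $\lim_{y\to0}g(x,y)$ exist and are continuous, so \textbf{H} holds. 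For \textbf{P1} one has $\partial g/\partial x=2c_2x\,g>0$ for $x>0$ and $g(0,y)=e^{-dy}>0$; for \textbf{P2}, $\sup_{y\ge0}f(x,y)=e^{\,r-b(x-a)^2}\to0$ as $x\to\infty$ (so $a_1=0<1$) and, for each fixed $x>0$, $g(x,y)=e^{\,c_2x^2-dy}\to0$ as $y\to\infty$ (so $a_2=0<1$). In particular the system is dissipative, which also licenses the boundedness conclusions of Lemma \ref{singleper}.

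For \textbf{P3} I would locate the nontrivial boundary fixed point on $S_x$: it must solve $f(x,0)=1$, i.e. $(x-a)^2=r/b$, and since $r>ba^2$ forces $\sqrt{r/b}>a$, the only positive root is $x^*=a+\sqrt{r/b}$; then $g(x^*,0)=e^{\,c_2(x^*)^2}>1$, so $(x^*,0)$ is unsaturated and \textbf{P3} holds. The substantive step is \textbf{P4}. I would compute on $S_x$ that $F_x(x,0)=-2b(x-a)$, $F_{xx}(x,0)=-2b$, $G_x(x,0)=2c_2x$, $G_{xx}(x,0)=2c_2$, observe that $F_x(x^*,0)=-2b\sqrt{r/b}=-2\sqrt{rb}\ne0$ so that $G_x(x^*,0)/F_x(x^*,0)$ is well defined, and then form the extended relative nonlinearity
\[
r^y(x)=G_{xx}(x,0)-\frac{G_x(x^*,0)}{F_x(x^*,0)}\,F_{xx}(x,0).
\]
Because $F(\cdot,0)$ and $G(\cdot,0)$ are quadratic, $F_{xx}$ and $G_{xx}$ are constant, so $r^y(x)$ does not depend on $x$, and the requirement ``$r^y(x)\ge0$ for all $x>0$'' collapses to a single inequality among $a,b,r,c_2$. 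Carrying out this reduction should yield exactly $1-\frac{a+\sqrt{r/b}}{\sqrt{rb}}\ge0$, equivalently $a^2<r(b+1/b-2)$, which is the hypothesis of the corollary; hence \textbf{P4} holds and Theorem \ref{sufficient_pp} gives permanence of \eqref{px}--\eqref{py} in $M$.

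The main obstacle is the verification of \textbf{P4}: one has to pin down the correct reference point $x^*$ (the nontrivial boundary equilibrium), check $F_x(x^*,0)\ne0$, and perform the algebra that converts ``$r^y\ge0$'' into the parameter condition on $a,b,r$; this is the only place where genuine content enters. Everything else --- \textbf{H}, \textbf{P1}, \textbf{P2}, \textbf{P3}, dissipativity, and the internal Hutson average-Lyapunov-function argument that drives Theorem \ref{sufficient_pp} (first $P(x,y)=x$ to obtain prey persistence, then $P(x,y)=y$ on the absorbing box $[b,L_m]\times[0,B]$ to obtain predator persistence) --- is routine once the explicit $F,G$ are in hand. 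A useful consistency check along the way is that, since $F(x^*,0)=0$ at the nontrivial equilibrium, Corollary \ref{c:relative} gives $\check r^{yx}(x_0,0)=G(x^*,0)+\Delta_y(x_0)$ with $G(x^*,0)=c_2(x^*)^2>0$, so the predator's invasion exponent into $S_x$ is strictly positive as soon as $\Delta_y\ge0$, which is exactly what \textbf{P4} guarantees.
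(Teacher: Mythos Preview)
Your proposal is correct and follows essentially the same route as the paper: both arguments verify Conditions \textbf{H}, \textbf{P1}--\textbf{P4} for the explicit $F(x,y)=r-b(x-a)^2-c_1y$, $G(x,y)=c_2x^2-dy$, identify the unique nontrivial boundary equilibrium $x^*=a+\sqrt{r/b}$, reduce \textbf{P4} to the constant-sign inequality $r^y(x)=2c_2\bigl(1-\tfrac{a+\sqrt{r/b}}{\sqrt{rb}}\bigr)\ge0$, and then invoke Theorem~\ref{sufficient_pp}. Your added consistency check via Corollary~\ref{c:relative} and the sketch of the two-step Hutson argument inside Theorem~\ref{sufficient_pp} are faithful elaborations of what the paper leaves implicit.
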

\noindent\textbf{Remark:} Notice that $F_x(x,0)$  will change signs as $x$ depending on the value of $x$. Thus, permanence theorems in Kang and Chesson (2010) and in Kon (2004) fail while Theorem \ref{sufficient_pp} applies.

 \section{Discussion}
 
 \indent Population per capita growth rate describes the per capita rate of growth of a population, as the factor by which population size increases per year, conventionally given the symbol $\lambda=\frac{N_{t+1}}{N_t}$, or as $r=\ln\lambda$. Therefore, population per capita growth rate is the summary parameter of trends in population density or abundance,  which tells us whether density and abundance are increasing, stable or decreasing, and how fast they are changing (Sibly and Hone 2002).  The per capita growth rate of a population can be broken down into negative density-dependent, density-independent, and positive density-dependent factors (Shreiber 2003). Negative density-dependent factors include resource depletion due to competition (Tilman 1982), environment modification (Jones \emph{et al.} 1997), mutual interference (Arditi and Akcakaya 1990) and cannibalism (Fox 1975). Positive density-dependent factors include predator saturation, cooperative predation or resource defense, increased availability of mates, and conspecific enhancement of reproduction (Courchamp \emph{et al.}1999; Stephens and Sutherland 1999; Stephens \emph{et al.} 1999). Combined with all density-dependent factors, population per capita growth rate can be negative density-dependent in some range of population density and positive density-dependent in some other ranges, i.e., the per capita growth rate of a population may be non-montonic with respect to its population density.

Processing multiple nontrivial equilibria is one important consequence of  a species with non-monotonic per capita growth rate. The possibility that plant and animal populations have multiple positive equilibria has received considerable attention in the ecological literature. Theory and observation indicate that natural multi-species assemblies of plants and animals are likely to possess several different equilibrium points (May 1977). Ecological examples include fish (e.g., Peterman 1977; Spencer and Collie 1997), insects (e.g., Ludwig \emph{et al.} 1978; Kuussaari \emph{et al.} 1998; Solow \emph{et al.} 2003), and phytoplankton (e.g., Beltrami 1989). Moreover, subtidal marine ecosystems in general, and reefs in particular, have several attributes which favor the existence of multiple stable states (Knowlton 1992). Beyond its scientific interest, this possibility also has important implications for the conservation and management of natural systems (Carpenter 2001). In this article, we focus on the permanence of a general two-species interaction model with non-monotonic per capita growth rate for the first time. Our results broaden the applications of permanence for general two-species interaction models.

 Many mathematicians (Fonda, 1988; Freedman \& So 1989; Shreiber 2000; Garay and Hofbauer 2003; Kon 2004; Salceanu and Smith 2009; Kang and Chesson 2010), have studied sufficient conditions on permanence of the dynamical systems. In this paper, we give an easy-to-check sufficient condition for guaranteeing the permanence of systems that can model complicated two-species interactions by using the average Lyapunov functions and extending the ecological concept of the relative nonlinearity.  The extended relative nonlinearity allows us to fully characterize the effects of nonlinearities in the per capita growth functions with non-monotonicity.
These results are illustrated with specific two-species competition and predator-prey models. In particular, we would like to point out that our result can apply to these models while theorems in Kang and Chession (2010) as well as Kon (2004) fails. In addition, our theorem can be easily extended to the mutualism models like \eqref{m1}-\eqref{m2}. Extend our results to higher dimensional discrete-time population models (i.e., food-web dynamics in ecology) will be our future work. 


\section*{Acknowledgements}
The author would like to thank Professor Peter Chesson for his encouragement in applying the concept of relative nonlinearity to study the permanence of ecological population models.

\bibliographystyle{elsarticle-harv}

 \end{document}